\documentclass[12pt]{amsart}
\pdfoutput=1
\setlength{\textheight}{8.5in}
\setlength{\textwidth}{6.3in}
\setlength{\topmargin}{0.0in}
\hoffset=-.6in
 \usepackage{amssymb,amsmath,amsfonts,epsfig,latexsym}

 \newtheorem{theorem}{Theorem}[section]
\newtheorem{definition}[theorem]{Definition}
\newtheorem{proposition}[theorem]{Proposition}
\newtheorem{lemma}[theorem]{Lemma}

\theoremstyle{definition}
\newtheorem{remark}[theorem]{Remark}
\newtheorem{example}[theorem]{Example}

\def\N{\ensuremath{\mathbb{N}}}
\def\Z{\ensuremath{\mathbb{Z}}}

\def\P{\ensuremath{\mathbb{P}}}

\def\F{\ensuremath{\mathbb{F}}}
\def\cF{\ensuremath{\mathcal{F}}}
\def\R{\ensuremath{\mathbb{R}}}

\def\OO{\ensuremath{\mathcal{O}}}

\DeclareMathOperator{\interior}{int}
\def\<{\ensuremath{\langle}}
\def\>{\ensuremath{\rangle}}

\DeclareMathOperator{\Hom}{Hom}
\DeclareMathOperator{\HHom}{\mathcal{H}\mathit{om}}

\DeclareMathOperator{\Spec}{Spec}

\begin{document}

\title[Frobenius splittings of toric varieties]{Frobenius splittings of toric varieties}

\author[Payne]{Sam Payne}

\thanks{Supported by the Clay Mathematics Institute}

\begin{abstract}
We discuss a characteristic free version of Frobenius splittings for toric varieties and give a polyhedral criterion for a toric variety to be diagonally split.  We apply this criterion to show that section rings of nef line bundles on diagonally split toric varieties are normally presented and Koszul, and that Schubert varieties are not diagonally split in general.
\end{abstract}

\maketitle

\section{Introduction}

Fix an integer $q$ greater than one.  Let $T = \Spec \Z[M]$ be the torus with character lattice $M$, and let $N$ be the dual lattice.  Let $\Sigma$ be a complete fan in $N_\R$, with $X = X(\Sigma)$ the associated toric variety over $\Z$.  Multiplication by $q$ preserves the fan and maps the lattice $N$ into itself, and therefore gives an endomorphism
\[
F: X \rightarrow X.
\]
Each $T$-orbit in $X$ is a torus that is preserved by $F$, which acts by taking a point $t$ to $t^q$.  For example, if $X$ is projective space, then $F$ is given in homogeneous coordinates by
\[
[x_0 : \cdots : x_n] \mapsto [x_0^q : \cdots : x_n^q].
\]
If $q$ is prime and $k$ is the field with $q$ elements, then the restriction of $F$ to the variety $X_{k}$ is the absolute Frobenius morphism.  Pulling back functions by $F$ gives a natural inclusion of $\OO_X$-algebras $F^* : \OO_X \hookrightarrow F_* \OO_X$.

\begin{definition}
A splitting of $X$ is an $\OO_X$-module map $\pi: F_* \OO_X \rightarrow \OO_X$ such that the composition $\pi \circ F^*$ is the identity on $\OO_X$.
\end{definition}

\noindent Standard results from the theory of Frobenius splittings generalize in a straightforward way to these splittings of toric varieties.  See Section~\ref{preliminaries} for details.  

If $Y$ is a subvariety of $X$ cut out by an ideal sheaf $I_Y$ and $\pi(F_* I_Y)$ is contained in $I_Y$ then we say that $\pi$ is \emph{compatible} with $Y$.  If $Y$ is a toric variety embedded equivariantly in $X$, the closure of a subtorus of an orbit in $X$, then a splitting compatible with $Y$ induces a splitting of $Y$.  We say that $X$ is \emph{diagonally split} if there is a splitting of $X \times X$ that is compatible with the diagonal, for some $q$.  Such splittings are of particular interest;  by classic arguments of Mehta, Ramanan and Ramanathan, if $X$ is diagonally split then every ample line bundle on $X$ is very ample and defines a projectively normal embedding.

Our main result is a polyhedral criterion for a toric variety to be diagonally split.  Let $v_\rho$ denote the primitive generator of a \emph{ray}, or one-dimensional cone, in $\Sigma$.  Let $M_\R = M \otimes_\Z \R$, and let the \emph{diagonal splitting polytope} $\F_X$ be defined by
\[
\F_X = \{ u \in M_\R \ | \ -1 \leq \<u, v_\rho \> \leq 1 \mbox{ for all } \rho \in \Sigma \}.
\]
We write $\frac{1}{q}M$ for the subgroup of $M_\R$ consisting of \emph{fractional lattice points} $u$ such that $qu$ is in $M$.

\begin{theorem} \label{diagonal}
The toric variety $X$ is diagonally split if and only if the interior of $\F_X$ contains representatives of every equivalence class in $\frac{1}{q}M / M$.
\end{theorem}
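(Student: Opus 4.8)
The plan is to encode a splitting of $X\times X$ compatible with $\Delta$ combinatorially using the material of Section~\ref{preliminaries}, and then read off the polytope condition. By the results there, such a splitting may be taken to be \emph{monomial}: its restriction to the big torus $T\times T$ is the $\OO_{T\times T}$-linear map sending each $\chi^{(u_1,u_2)}$, with $(u_1,u_2)\in\frac{1}{q}(M\oplus M)$, either to $0$ or to a single monomial $\chi^{(u_1+h_1,\,u_2+h_2)}$, where $h_1(\bar c),h_2(\bar c)\in\frac{1}{q}M$ depend only on the class $\bar c$ of $(u_1,u_2)$ in $\frac{1}{q}(M\oplus M)/(M\oplus M)$. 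That this defines a splitting of $X\times X$ compatible with $\Delta$ means: it sends $\chi^{(0,0)}$ to $1$; it extends over every affine chart; and it respects the ideal sheaf of $\Delta$. Since the relevant $\HHom$ sheaf is reflexive and the locus of orbits of codimension $\geq 2$ is closed of codimension $\geq 2$, the chart condition need only be checked on the charts $U_\rho\times T$ and $T\times U_\rho$ attached to the rays $(v_\rho,0)$ and $(0,v_\rho)$ of the product fan; it then unwinds to \emph{(a)}: for every ray $\rho$ and every class $\bar c$ not sent to $0$, with representative $(a_1,a_2)$, one has $\langle h_1(\bar c),v_\rho\rangle\geq-\{\langle a_1,v_\rho\rangle\}$ and $\langle h_2(\bar c),v_\rho\rangle\geq-\{\langle a_2,v_\rho\rangle\}$, where $\{\cdot\}$ denotes fractional part. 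Respecting the ideal of $\Delta$ unwinds, using $\Delta\cong X$ and that restriction to $\Delta$ is $\chi^{(m_1,m_2)}\mapsto\chi^{m_1+m_2}$, to \emph{(b)}: the composite of the splitting with restriction to $\Delta$ factors through the sum map $\frac{1}{q}(M\oplus M)/(M\oplus M)\to\frac{1}{q}M/M$; since $\chi^{(0,0)}\mapsto 1$, every class $\bar c$ lying over $\bar 0\in\frac{1}{q}M/M$ satisfies $h_1(\bar c)+h_2(\bar c)=0$.

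For the forward implication, assume $X$ is diagonally split and fix a monomial splitting satisfying (a) and (b). Let $\bar b\in\frac{1}{q}M/M$ with representative $b\in\frac{1}{q}M$. The class $\bar c$ of $(b,-b)$ lies over $\bar 0$, so $h_2(\bar c)=-h_1(\bar c)$, and applying (a) with $(a_1,a_2)=(b,-b)$ gives, for every ray $\rho$,
\[
-\{\langle b,v_\rho\rangle\}\ \leq\ \langle h_1(\bar c),v_\rho\rangle\ \leq\ \{-\langle b,v_\rho\rangle\}.
\]
Since both fractional parts are strictly less than $1$, this forces $-1<\langle h_1(\bar c),v_\rho\rangle<1$ for all $\rho$, that is, $h_1(\bar c)\in\interior\F_X$. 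As $h_1(\bar c)\in\frac{1}{q}M$ represents $-\bar b$ and $\F_X$ is symmetric, $-h_1(\bar c)$ is a representative of $\bar b$ in $\interior\F_X$.

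For the reverse implication, assume $\interior\F_X$ contains a representative $p_{\bar b}\in\frac{1}{q}M$ of every class $\bar b$, with $p_{\bar 0}=0$. Define $\tilde\pi$ to kill $\chi^{(u_1,u_2)}$ unless $u_1+u_2\in M$, and otherwise to send it to $\chi^{(u_1+p,\,u_2-p)}$ with $p=p_{-\bar u_1}$. This is a well-defined $\OO$-linear map sending $\chi^{(0,0)}$ to $1$, and composing with restriction to $\Delta$ sends $\chi^{(u_1,u_2)}$ to $\chi^{u_1+u_2}$ or $0$, which factors through the sum map, so (b) holds and $\tilde\pi$ will be compatible with $\Delta$ once it is a splitting. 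For (a): $\langle p,v_\rho\rangle$ lies in $(-1,1)$ and is congruent to $-\langle u_1,v_\rho\rangle$ modulo $\Z$, hence equals $-\{\langle u_1,v_\rho\rangle\}$ or $1-\{\langle u_1,v_\rho\rangle\}$; one checks directly that both values, and their negatives evaluated at $v_\rho$ after using $\bar u_2=-\bar u_1$, satisfy the inequalities of (a). By reflexivity $\tilde\pi$ then extends to a splitting of $X\times X$ compatible with $\Delta$.

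The main obstacle is the first paragraph: pinning down the combinatorial dictionary precisely — that a splitting compatible with $\Delta$ may be taken monomial, that the chart conditions reduce to the rays, and, above all, that compatibility with $\Delta$ is captured \emph{exactly} by the "factors through the sum map" condition (b), with no further constraints hidden in the higher-codimension charts. Once this is in place the two implications are short, but the fractional-part bookkeeping in (a) must be carried out carefully so that it is the open polytope $\interior\F_X$, and not its closure, that appears in the criterion.
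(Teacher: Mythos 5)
Your converse direction is essentially the paper's: given representatives $p_{\bar b}\in\interior\F_X$ you build $\tilde\pi$ by summing the basis maps $\pi_{p,-p}$ over the chosen representatives together with $\pi_0$, and check compatibility on the generators $x^b(1-x^{(u,-u)})$ of $F_*I_\Delta$. That part is fine.

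The forward direction, however, rests on a reduction you have not established and which is not in Section~\ref{preliminaries}: that a splitting of $X\times X$ compatible with $\Delta$ ``may be taken to be monomial,'' i.e.\ of the form $\sum_{\bar c}\pi_{h(\bar c)}$ with at most one basis map per coset. You flag this yourself as the main obstacle, but it is load-bearing: without it, $h_1(\bar c)$ and $h_2(\bar c)$ are not defined and the inequality bookkeeping has nothing to act on. The reduction is not a formal consequence of the eigenbasis description — the compatibility condition $\pi(F_*I_\Delta)\subset I_\Delta$ is only stable under the diagonal subtorus, not under $T\times T$, so one cannot decompose a compatible splitting into compatible eigen-pieces; and the cleanest way to \emph{get} a compatible monomial splitting is precisely to first prove Theorem~\ref{diagonal} and then write down $\pi_\Delta=\pi_0+\sum\pi_{a_i,-a_i}$, which would make your forward argument circular.

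The paper sidesteps this by working directly with an arbitrary compatible splitting $\pi=\sum c_{a,a'}\pi_{a,a'}$: for each $u\in\frac1qM$ one has $1-x^u\otimes x^{-u}\in F_*I_\Delta$ and $\pi(1)=1$, so the restriction of $\pi(x^u\otimes x^{-u})$ to the diagonal equals $1$; its constant term is $\sum_{a\in[u]}c_{-a,a}$, which must therefore equal $1$, forcing some $c_{-a,a}\neq 0$, and by Proposition~\ref{basis} that $(-a,a)$ lies in $\interior(P_{-K_X}\times P_{-K_X})$, i.e.\ $a\in\interior\F_X$. No monomiality is ever needed. If you replace your monomial reduction with this constant-term extraction, the rest of your computation (the fractional-part bounds giving $-1<\langle h_1,v_\rho\rangle<1$) becomes the analogous statement about the single nonzero $c_{-a,a}$ and goes through; as written, the proposal has a genuine gap.

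Two smaller points worth noting: the justification ``the chart condition need only be checked on rays'' is exactly the content of Propositions~\ref{affine extension} and~\ref{basis}, so you can cite them rather than appealing to reflexivity of $\HHom$ separately; and your condition (b) as stated (``factors through the sum map'') should be formulated more carefully so that it also controls which classes are killed, although for the converse you only use the special case where all classes not over $\bar 0$ are killed, which is what the paper's $\pi_\Delta$ does.
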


While the existence of a compatible splitting of the diagonal in $X \times X$ implies that section rings of ample line bundles on $X$ are generated in degree one, compatible splittings of large semidiagonals in products of multiple copies of $X$ give further information on these section rings.  For example, if the union of $\Delta \times X$ and $X \times \Delta$ is compatibly split in $X \times X \times X$, where $\Delta$ is the diagonal in $X \times X$, then it follows from standard arguments that the section ring of each ample line bundle on $X$ is \emph{normally presented}, that is, generated in degree one with relations generated in degree two.  For fixed $n$ greater than one, let $\Delta_i$ be the large semidiagonal
\[
\Delta_i = X^{i-1} \times \Delta \times X^{n - i - 1},
\]
for $1 \leq i < n$.  

\begin{theorem} \label{semidiagonals}
Let $X$ be a diagonally split toric variety.  Then $\Delta_1 \cup \cdots \cup \Delta_{n-1}$ is compatibly split in $X^n$.
\end{theorem}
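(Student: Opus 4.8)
The plan is to reduce to producing a single splitting of the toric variety $X^n$ that is compatible with all of $\Delta_1,\dots,\Delta_{n-1}$ at once, and then to assemble such a splitting from the fractional representatives supplied by Theorem~\ref{diagonal}. Two preliminary observations set this up. First, $\Delta_1\cup\cdots\cup\Delta_{n-1}$ has ideal sheaf $\bigcap_i I_{\Delta_i}$, and for any splitting $\pi$ one has $\pi\big(F_*(\bigcap_i I_{\Delta_i})\big)\subseteq\pi(F_*I_{\Delta_i})\subseteq I_{\Delta_i}$ for each $i$, so a splitting of $X^n$ that is compatible with each $\Delta_i$ is automatically compatible with their union. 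Second, $X^n$ is the complete toric variety with character lattice $M^n$ whose rays are the $v_\rho$ placed in each of the $n$ coordinate slots; consequently $\tfrac1q(M^n)/M^n\cong\big(\tfrac1q M/M\big)^n$, the diagonal splitting polytope is the product $\F_{X^n}=\F_X\times\cdots\times\F_X$, and each $\Delta_i$ is the closure in $X^n$ of the subtorus $\{(t_1,\dots,t_n):t_i=t_{i+1}\}$, namely the subtorus attached to the quotient $M^n\twoheadrightarrow M^n/L_i$ with $L_i=\{(0,\dots,0,u,-u,0,\dots,0)\}$ ($u$ in the $i$-th slot). In particular any splitting of $X^n$ compatible with $\Delta_i$ restricts to a splitting of $\Delta_i\cong X^{n-1}$.

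I would then record, following the proof of Theorem~\ref{diagonal}, the combinatorial description of the relevant splittings. A map $F_*\OO_{X^n}\to\OO_{X^n}$ is given by a section $\sigma=\sum_{\mathbf e}\sigma_{\mathbf e}\chi^{\mathbf e}$ of $\omega_{X^n}^{1-q}$, the condition that it be defined on all of $X^n$ being $\langle e_j,v_\rho\rangle\ge-(q-1)$ for every $j$ and every $\rho$; such a map is a splitting exactly when $\sigma_{\mathbf 0}=1$ and $\sigma_{\mathbf e}=0$ for every nonzero $\mathbf e$ divisible by $q$ in $M^n$. Running the computation made for the diagonal in $X\times X$ on each $L_i$, compatibility of the splitting with $\Delta_i$ translates into the equidistribution condition that, for every fixing of the coordinates $e_j$ with $j\ne i,i+1$ and every $S\in M$, the sum $\sum_{\,e_i+e_{i+1}=S,\ e_i\equiv\bar c\ (qM)}\sigma_{(\dots,e_i,e_{i+1},\dots)}$ be independent of the class $\bar c\in M/qM$. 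Thus the theorem reduces to a purely polyhedral assertion: for some $q$ for which $X$ is diagonally split there is a $\Z$-valued function $\sigma$ on $M^n$, supported inside the polytope of $\omega_{X^n}^{1-q}$, satisfying the splitting normalization together with these $n-1$ equidistribution identities.

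The construction then proceeds from a choice of representatives $u(\bar c)\in\interior\F_X$ for the classes $\bar c\in\tfrac1q M/M$, which Theorem~\ref{diagonal} provides (after enlarging $q$ if necessary so that the interior covers all the needed classes) and which by the symmetry $\F_X=-\F_X$ may be taken with $u(\bar 0)=0$ and $u(-\bar c)=-u(\bar c)$; the diagonal-compatible splitting of $X\times X$ is then the one given by $\sigma_0=\sum_{\bar c}\chi^{\,q(u(\bar c),\,-u(\bar c))}$, whose exponents all lie in $q\interior\F_X$ in both slots. For $X^n$ one wants a section that interpolates copies of $\sigma_0$ along the chain $1,2,\dots,n$ in such a way that every exponent it produces is still built out of the vectors $\pm u(\bar c)\in\interior\F_X$ and hence stays inside the polytope of $\omega_{X^n}^{1-q}$. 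The main obstacle, and the technical heart of the proof, is precisely to arrange this so that all $n-1$ equidistribution conditions hold simultaneously without leaving the allowed polytope: already for $X\times X$ the diagonal-compatible section is not an external product, so one cannot simply multiply the pairwise sections $\sigma_0^{(i,i+1)}$ together — that would push the middle coordinates outside $q\,\F_{X^n}$ — and a genuinely new idea beyond Theorem~\ref{diagonal} is needed to pin down the right section, equivalently a coherent family of fractional representatives along the chain, after which checking each equidistribution identity is bookkeeping. This is the step where enlarging $q$ and the central symmetry of $\F_X$ are used.
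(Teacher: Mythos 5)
You set up the problem correctly: compatibility with each $\Delta_i$ gives compatibility with the union, the relevant polytope for $X^n$ is the product $P_{-K_X}^n$ (which you call the polytope of $\omega_{X^n}^{1-q}$), and compatibility with $\Delta_i$ is an ``equidistribution'' condition on the coefficients of the section. You also correctly observe that the multiplicative attempt — pasting together copies of the diagonal-compatible section along the chain — would produce exponents whose middle coordinates fall outside the allowed polytope. But having isolated the technical heart of the argument, you stop there and declare that ``a genuinely new idea beyond Theorem~\ref{diagonal} is needed to pin down the right section,'' without producing it. That missing step is exactly the content of the proof, so the proposal has a genuine gap rather than a different route.

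The construction the paper uses is \emph{additive}, not multiplicative, and this sidesteps the problem you flagged. Taking representatives $a_j \in \interior \F_X$ of the nonzero classes in $\tfrac1q M/M$ as provided by Theorem~\ref{diagonal} (and the same $q$ — there is no need to enlarge it), the splitting of $X^n$ is
\[
\pi \;=\; \pi_0 \;+\; \sum_{i=1}^{n-1}\sum_{j}\pi_{a_j^{(i)}-a_j^{(i+1)}},
\]
that is, $\pi_0$ plus, for each adjacent pair $(i,i+1)$ of factors, the ``excess'' $\pi_\Delta - \pi_0$ of the diagonal-compatible splitting placed in slots $i,i+1$ with the canonical splitting in the remaining slots. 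Each exponent $a_j^{(i)}-a_j^{(i+1)}$ is nonzero in only two coordinates, where it equals $a_j$ and $-a_j$; both lie in $\interior\F_X\subseteq\interior P_{-K_X}$ by the central symmetry of $\F_X$, and $0\in\interior P_{-K_X}$ for the remaining coordinates, so the whole exponent lies in $\interior P_{-K_{X^n}}$ and the section is regular. Thus the coherence you were looking for is obtained not by choosing a coherent chain of fractional representatives inside one monomial, but by summing single terms each of which touches only two adjacent factors; the remaining verification that this $\pi$ is compatible with each $\Delta_i$ is the bookkeeping step, which the paper refers back to the $n=2$ case. In short: your framework and your diagnosis of the obstacle are right, but the key constructive idea — the additive $\pi_0+\sum(\pi_\Delta^{(i)}-\pi_0)$ — is absent from your proposal.
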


\noindent In particular, if $X$ is diagonally split then the union of $\Delta \times X$ and $X \times \Delta$ is compatibly split in $X \times X \times X$, so the section ring of any ample line bundle on $X$ is normally presented.  Analogous results hold for any finite collection of nef line bundles on $X$, as we now discuss.

For line bundles $L_1, \ldots, L_r$ on $X$, let $R(L_1, \ldots, L_r)$ be the section ring
\[
R(L_1, \ldots, L_r) = \! \! \!  \bigoplus_{\ \ \ (\alpha_1, \ldots, \alpha_r) \in \N^r} H^0(X, L_1^{\alpha_1} \otimes \cdots \otimes L_r^{\alpha_r}).
\]
We consider $R(L_1, \ldots, L_r)$ as a graded ring, where the degree of $H^0(X, L_1^{\alpha_1} \otimes \cdots \otimes L_r^{\alpha_r})$ is $\alpha_1 + \cdots + \alpha_r$.  In particular, the degree zero part of $R(L_1, \ldots, L_r)$ is $\Z$.  Recall that a graded ring $R$ is Koszul if the ideal generated by elements of positive degree has a linear resolution as an $R$-module.  See \cite{PolishchukPositelski05} for background on Koszul rings and further details.

\begin{theorem} \label{diagonal to Koszul}
Let $X$ be a complete, diagonally split toric variety, and let $L_1, \ldots, L_r$ be nef line bundles on $X$.  Then the section ring $R(L_1, \ldots, L_r)$ is normally presented and Koszul.
\end{theorem}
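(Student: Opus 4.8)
The plan is to derive both conclusions from Theorem~\ref{semidiagonals} together with Demazure vanishing for nef line bundles on complete toric varieties, via the standard dictionary (Ramanan--Ramanathan, Inamdar--Mehta) between compatible splittings of unions of semidiagonals and syzygies of section rings. Write $R = R(L_1, \ldots, L_r)$, and for $\alpha = (a_1, \ldots, a_r) \in \N^r$ put $L^\alpha = L_1^{a_1} \otimes \cdots \otimes L_r^{a_r}$, with total degree $a_1 + \cdots + a_r$. For $n \ge 2$ set $D_n = \Delta_1 \cup \cdots \cup \Delta_{n-1} \subset X^n$, and let $M$ range over the line bundles $L^{\alpha_1} \boxtimes \cdots \boxtimes L^{\alpha_n}$ on $X^n$ with each $\alpha_j \in \N^r$ of total degree at least one; every such $M$ is nef. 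Since $H^{>0}(X, L^\alpha) = 0$ for all $\alpha \in \N^r$, Künneth identifies the position-$n$, internal-degree-$d$ term of the bar complex computing $\mathrm{Tor}^R(\Z, \Z)$ with $\bigoplus H^0(X^n, M)$, the sum over tuples $(\alpha_1, \ldots, \alpha_n)$ with $\alpha_1 + \cdots + \alpha_n$ of total degree $d$, the bar differential being the alternating sum of the restrictions along the $\Delta_i$. By the cited dictionary it therefore suffices to prove that $H^j(X^n, I_{D_n} \otimes M) = 0$ for all $j > 0$, all $n \ge 2$, and all such $M$: the case $n = 2$ yields that $R$ is generated in degree one, the cases $n = 2, 3$ yield normal presentation (the case $n = 3$ is the splitting of $\Delta \times X \cup X \times \Delta$ in $X^3$ noted before the theorem), and the whole family yields Koszulness.

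So the task reduces to this family of vanishings. Fix $n$ and $M$. By Theorem~\ref{semidiagonals}, $X^n$ is compatibly split along $D_n$; since $M$ is nef, $H^{>0}(X^n, M) = 0$, so from the exact sequence $0 \to I_{D_n} \otimes M \to M \to M|_{D_n} \to 0$ the vanishing $H^{>0}(X^n, I_{D_n} \otimes M) = 0$ follows once $H^{>0}(D_n, M|_{D_n}) = 0$ and the restriction $H^0(X^n, M) \to H^0(D_n, M|_{D_n})$ is surjective, the latter being supplied by the compatible splitting once the higher cohomology on $D_n$ is known to vanish. To treat the reducible scheme $D_n$ I would induct on the number of its components by means of the Mayer--Vietoris spectral sequence for the covering of $D_n$ by the $\Delta_i$: every nonempty intersection $\Delta_{i_1} \cap \cdots \cap \Delta_{i_k}$ is a product of copies of $X$, hence a complete toric variety on which $M$ restricts to a nef line bundle, so Demazure vanishing kills its higher cohomology; such an intersection is moreover compatibly split in $X^n$, being an intersection of compatibly split subvarieties, and each of its factors is the diagonally split variety $X$, so the restriction maps among the strata remain surjective at every stage. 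Feeding this into the spectral sequence gives $H^{>0}(D_n, M|_{D_n}) = 0$ and the required surjectivity, and hence the desired vanishing.

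The step I expect to be the main obstacle is precisely this passage from the splitting of the \emph{reducible} scheme $D_n$ to the cohomological vanishing: one must transport, simultaneously and compatibly, both the vanishing of higher cohomology and the surjectivity of restriction maps through every stratum of $D_n$, using again and again that intersections of semidiagonals are products of copies of $X$, so that toric vanishing applies and Theorem~\ref{semidiagonals} can be reinvoked on the pieces. By contrast, the final passage from this family of vanishings to normal presentation and Koszulness is essentially formal. One could also sidestep the multigraded bookkeeping by passing to the projective bundle $P = \mathbb{P}_X(L_1 \oplus \cdots \oplus L_r)$, a complete toric variety with $R(\OO_P(1)) = R(L_1, \ldots, L_r)$ and $\OO_P(1)$ nef; but this route requires first checking, via the polyhedral criterion of Theorem~\ref{diagonal}, that $P$ is again diagonally split, and it is exactly the nef hypothesis on the $L_i$ that makes that verification go through.
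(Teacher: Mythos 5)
Your proposal takes the same top-level route as the paper: invoke Theorem~\ref{semidiagonals} to obtain a compatible splitting of $\Delta_1 \cup \cdots \cup \Delta_{n-1}$ in $X^n$ for every $n$, then feed this into the standard Ramanan--Ramanathan/Inamdar--Mehta dictionary relating splittings of semidiagonals to normal presentation and Koszulness. The paper's entire proof is exactly this two-step reduction, with the dictionary packaged as Proposition~\ref{Koszul}; the proof of that proposition is explicitly omitted as ``essentially identical to the standard proofs'' with a pointer to Brion--Kumar, Exercise~1.5.E.3, and the extension from ample to nef is handled by citing Inamdar and pulling back along a birational toric morphism to a variety where the bundle is ample. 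So your bar-complex and Mayer--Vietoris elaboration is a sketch of content the paper outsources, not a divergence in strategy.

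One caution on that elaboration, since you flag it yourself as the likely obstacle: in the Mayer--Vietoris spectral sequence for the cover of $D_n$ by the $\Delta_i$, killing the $E_2^{p,0}$ terms for $p>0$ requires exactness of the complex $\bigoplus H^0(\Delta_i, M) \to \bigoplus H^0(\Delta_i \cap \Delta_j, M) \to \cdots$, and termwise surjectivity of individual restriction maps does not by itself give that exactness. The standard treatments (Inamdar--Mehta, Bezrukavnikov, and the exposition in Brion--Kumar) instead argue directly with compatible splittings of the relevant unions of partial intersections, which yields the surjectivity of $H^0(X^n, M) \to H^0(D_n, M|_{D_n})$ and the vanishing of $H^1(X^n, I_{D_n} \otimes M)$ in the form actually needed, without establishing full $H^{>0}$ vanishing for $I_{D_n} \otimes M$. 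Since the paper itself treats all of this as background, the mismatch does not affect the comparison of approaches, but your sketch as written would need that step repaired before it could stand on its own.
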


\noindent  In particular, if $X$ is diagonally split then the section ring of any ample line bundle on $X$ is normally presented and Koszul.  Well-known open problems ask whether every ample line bundle on a smooth projective toric variety gives a projectively normal embedding \cite{Oda-preprint} and, if so, whether its section ring is normally presented \cite[Conjecture~13.19]{Sturmfels96}.  When the section ring is normally presented, it is natural to ask whether it is also Koszul.  Addressing these questions and their analogues for singular toric varieties in as many cases as possible is one of the main motivations behind this work.

\begin{remark}
The section ring $R(L_1, \ldots, L_r)$ associated to a finite collection of line bundles is canonically identified with the section ring of the line bundle $\OO(1)$ on the projectivized vector bundle $\P(L_1 \oplus \cdots \oplus L_r)$, which is also a toric variety.  If $L_1, \ldots, L_r$ are nef and correspond to polytopes $P_1, \ldots, P_r$, then the Cayley sum is the polytope associated to $\OO(1)$.  Cayley sums have also appeared prominently in recent work related to boundedness questions in toric mirror symmetry \cite{BatyrevNill07b, BatyrevNill07, HNP}.
\end{remark}

\begin{remark}
Frobenius morphisms and their lifts to characteristic zero have been used powerfully in several other contexts related to the geometry of toric varieties, including by Buch, Lauritzen, Mehta and Thomsen to prove Bott vanishing and degeneration of the Hodge to de Rham spectral sequence \cite{BTLM97}, by Totaro to give a splitting of the weight filtration on Borel-Moore homology \cite{Totaro}, by Smith to prove global $F$-regularity \cite{Smith00}, by Brylinski and Zhang to prove degeneration of a spectral sequence computing equivariant cohomology with rational coefficients \cite{BrylinskiZhang03}, and by Fujino to prove vanishing theorems for vector bundles and reflexive sheaves \cite{Fujino07}.  Frobenius splittings have also played a role in unsuccessful attempts to show that section rings of ample line bundles on smooth toric varieties are normally presented \cite{Bogvad95}.  We hope that this work will help revive the insight of B\o gvad and others into the potential usefulness of Frobenius splittings as a tool for understanding ample line bundles on toric varieties.
\end{remark}

We conclude the introduction with an example illustrating Theorem~\ref{diagonal} for Hirzebruch surfaces.  As mentioned earlier, the proofs that section rings of ample line bundles on Schubert varieties are normally presented and Koszul via Frobenius splittings involved compatible splittings of semidiagonals in $(G/B)^n$.  It has been an open question for over twenty years whether Schubert varieties themselves are diagonally split (see \cite[Remark~3.6]{Ramanathan87} and \cite[p.~81]{BrionKumar05}).  The following example gives a negative answer; the Hirzebruch surface $F_3$ is a Schubert variety in the $G_2$-flag variety, and $F_3$ is not diagonally split.

\begin{remark}
To see that $F_3$ occurs as a Schubert variety in the $G_2$-flag variety, first note that for any $G$, $G/B$ is a $\P^1$-bundle over $G/P$, where $P$ is a minimal parabolic subgroup.  If $w = s_1 s_2$ is an element of length two in the Weyl group of $G$, and $P$ is the minimal parabolic corresponding to $s_1$, then $X_w$ is a $\P^1$-bundle over its image, which is a rational curve in $G/P$.  In particular, $X_w$ is a Hirzebruch surface.  Then $X_{s_1}$ is a rational curve in $X_w$ with self-intersection $\< \alpha_2, \alpha_1^\vee\>$, where $\alpha_i$ is the simple root corresponding to $s_i$, and $\alpha_i^\vee = 2 \alpha_i / \<\alpha_i, \alpha_i \>$.  See \cite[Section~2]{Kempf76} for details.  For $G_2$, we can choose coordinates identifying the root lattice with the sublattice of $\Z^3$ consisting of those $(a_1, a_2, a_3)$ such that $a_1 + a_2 + a_3 = 0$, with simple roots $\alpha_1 = (1,-1,0)$ and $\alpha_2 = (-1,2,-1)$.  Then $X_{s_1}$ is a curve of self-intersection $-3$ in $X_w$, and hence $X_w$ is isomorphic to $F_3$.  See also \cite{Anderson07} for a detailed study of the $G_2$-flag variety and its Schubert varieties.
\end{remark}

\begin{example}
Let $a$ be a nonnegative integer, and let $\Sigma$ be the complete fan in $\R^2$ whose rays are spanned by $(1,0)$, $(0,1)$, $(0,-1)$, and $(-1, a)$.  Then $X(\Sigma)$ is isomorphic to the Hirzebruch surface $F_a$, the projectivization of the vector bundle $\OO_{\P^1} \oplus \OO_{\P^1}(a)$ \cite[pp. 7--8]{Fulton93}.  Let $q \geq 2$ be an integer.  By Theorem~\ref{diagonal}, $X$ is diagonally split if and only if the fractional lattice points in the interior of $\F_X$ represent every equivalence class in $\frac{1}{q}\Z^2 / \Z^2$.  The polytopes $\F_X$ for different values of $a$ are shown below.

\vspace{5 pt}

\begin{center}
\scalebox{.7}{\includegraphics{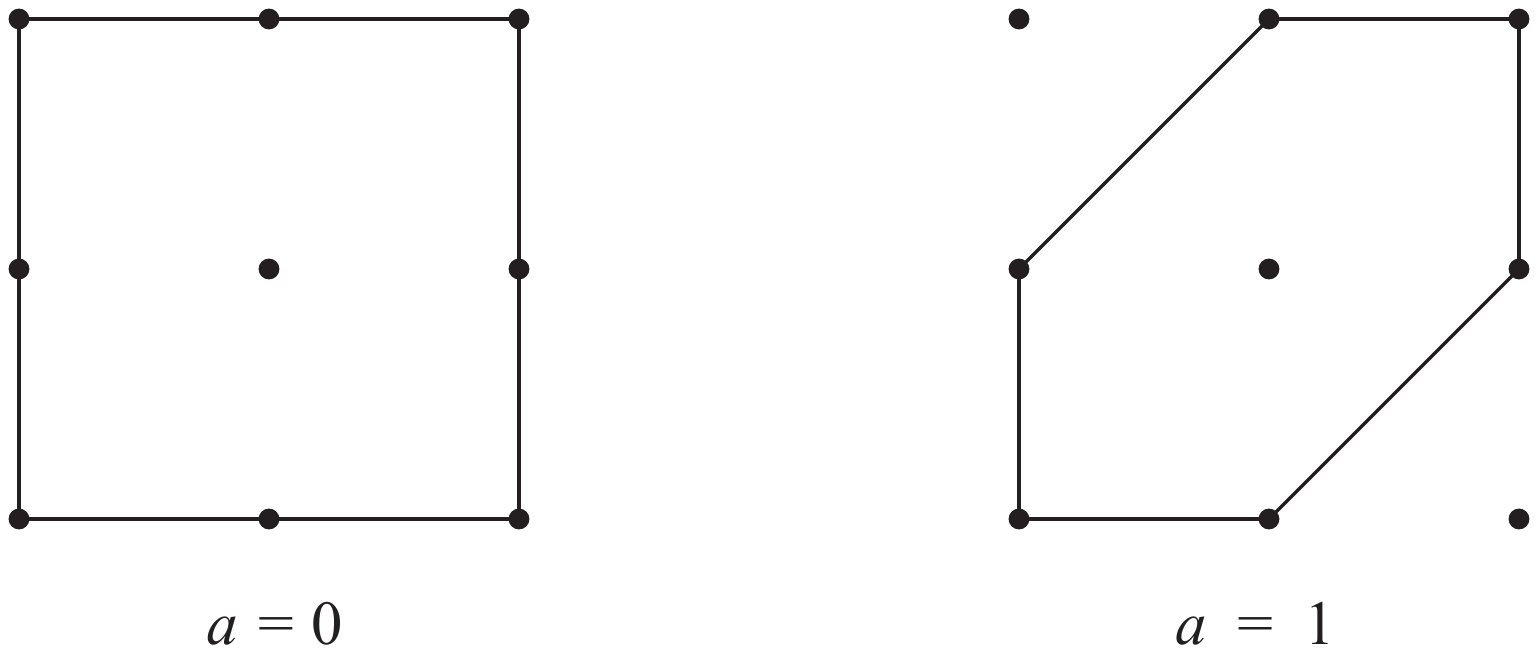}}
\end{center}

If $a$ is equal to zero or one, then the interior of $\F_X$ contains the half open unit square $[0,1) \times [0,1)$, which contains representatives of every equivalence class in $\frac{1}{q} \Z^2 / \Z^2$.  Therefore, $F_0$ and $F_1$ are diagonally split for all $q$.

\vspace{10 pt}

\begin{center}
\scalebox{.7}{\includegraphics{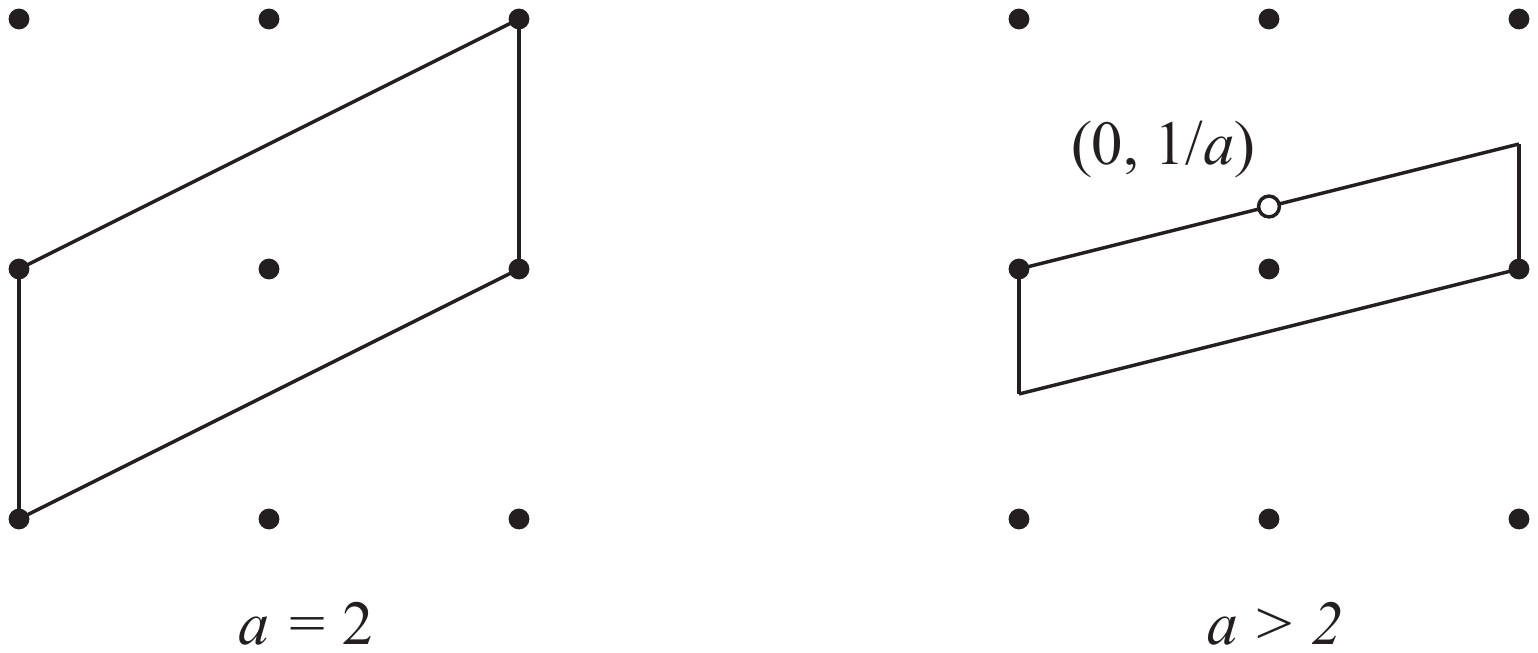}}
\end{center}

If $a$ is two, then $\F_X$ is the parallelogram with vertices $(\pm 1, 0)$, $\pm(1,1)$.  For $0 \leq m < q$, the interior of this intersection contains the fractional lattice points $(m/q, n/q)$ for $(m-q)/2 < n < (m + q)/2$.  If $q$ is odd, then these represent every equivalence class in $\frac{1}{q} \Z^2 / \Z^2$.  In particular, $F_2$ is diagonally split for $q$ odd.

\vspace{5 pt}

If $a$ is greater than two, then $\F_X$ is the parallelogram with vertices $(\pm 1, 0)$, $\pm(1, \frac{2}{a})$.  The only points in the interior of this intersection whose first coordinate is integral are of the form $(0, y)$ for $-1/a < y < 1/a$.  In particular, the equivalence class of $(0, \lfloor q/2 \rfloor / q)$ in $\frac{1}{q} \Z^2 / \Z^2$ is not represented by any point in the interior of this intersection.  Therefore, $F_a$ is not diagonally split for $a$ greater than two.
\end{example}

\noindent {\bf Acknowledgments.}  This work grew out of an expository talk at a mini-workshop on normality of lattice polytopes at Oberwolfach.  I thank the organizers, C.~Haase, T.~Hibi, and D.~Maclagan, as well as the other participants, for the chance to be part of such a stimulating event, and the MFO for their hospitality.  While preparing this paper, I benefited from helpful discussions with D.~Anderson, D.~Eisenbud, N.~Fakhruddin, B.~Howard, and A.~Paffenholz, and from the referee's thoughtful comments.  Finally, I am grateful to N.~Lauritzen and J.~Thomsen for pointing out that the methods in a first version of this paper can be used to give the example of the non-diagonally split Schubert variety presented here.

\section{Preliminaries} \label{preliminaries}

Frobenius splittings were introduced and developed by Mehta, Ramanathan, and their collaborators in the 1980s.  The original paper of Mehta and Ramanathan \cite{MehtaRamanathan85} is exceedingly well written and remains an excellent first introduction to the subject. Frobenius splittings were rapidly applied to give elegant unified proofs that all ample line bundles on generalized Schubert varieties of all types are very ample and give projectively normal embeddings whose images are cut out by quadrics \cite{RamananRamanathan85, Ramanathan87}.  Inamdar and Mehta, and independently Bezrukavnikov, later showed that the homogeneous coordinate rings of these embeddings are Koszul \cite{InamdarMehta94, Bezrukavnikov95}.  In characteristic zero, these results are deduced from the positive characteristic case using general semicontinuity theorems.  See the recent book of Brion and Kumar \cite{BrionKumar05} for a unified exposition of these results, along with further details, references, and applications.  On toric varieties, the Frobenius endomorphisms lift to endomorphisms over $\Z$, and it seems easiest and most natural to work independently of the characteristic using these lifted endomorphisms.  One feature of this approach is that we can prove results about section rings of toric varieties over $\Z$, or an arbitrary field, by producing a splitting of the diagonal in $X \times X$ for a single $q$.

\vspace{10 pt}

We begin by considering the structure of $F_* \OO_X$ as an $\OO_X$-module.  As a sheaf of groups, $F_* \OO_X$ evaluated on the invariant affine open set $U_\sigma$ associated to a cone $\sigma \in \Sigma$ is the coordinate ring $\Z[U_\sigma]$, which is usually identified with the semigroup ring $\Z[\sigma^\vee \cap M]$.  However, the module structure on $F_* \Z[U_\sigma]$ is different from the action of $\Z[U_\sigma]$ on itself.  For this reason, we identify $F_* \Z[U_\sigma]$ with the semigroup ring of fractional lattice points
\[
F_* \Z[U_\sigma] = \Z[\sigma^\vee \cap \textstyle{\frac{1}{q}}M],
\]
taking a monomial $x^u \in \Z[U_\sigma]$ to $x^{u/q}$.  The action of $\Z[U_\sigma]$ on $F_* \Z[U_\sigma]$ is then induced by the natural action of $M$ on $\frac{1}{q}M$, so
\[
x^u \cdot x^{u'} = x^{u + u'},
\]
for $u \in M$ and $u' \in \frac{1}{q}M$.    If $Y$ is a toric variety embedded equivariantly in $X$, then a splitting $\pi$ is compatible with $Y$ if and only if the induced map
\[
\Z[U_\sigma] \xrightarrow{\sim} \Z[\sigma^\vee \cap \textstyle{\frac{1}{q}}M] \xrightarrow{\pi} \Z[U_\sigma]
\]
maps $I_Y(U_\sigma)$ into $I_Y(U_\sigma)$ for every $\sigma \in \Sigma$.

We now summarize some basic properties of compatible splittings and their applications to section rings of ample line bundles.   Let $X$ be a complete toric variety, and let $L$ be a line bundle on $X$.  A splitting $\pi$ of $X$ makes $\OO_X$ a direct summand of $F_* \OO_X$ and hence $L$ a direct summand of $L \otimes F_* \OO_X$.  By the projection formula, $L \otimes F_* \OO_X$ is isomorphic to $F_*(F^* L)$, and we claim that $F^* L$ is isomorphic to $L^q$.  To see this, note that there is a $T$-invariant Cartier divisor $D$ such that $L$ is isomorphic to $\OO(D)$ \cite[Section~3.4]{Fulton93}.  The restriction of $D$ to an invariant affine open $U_\sigma$ is the divisor of a rational function $x^u$ for some $u \in M$, and hence the restriction of $F^*D$ to $U_\sigma$ is the divisor of $x^{qu}$.  It follows that $F^*L$ is isomorphic to $L^q$, as claimed.  Now, since cohomology commutes with direct sums, $\pi$ induces a split injection
\[
H^i(X,L) \hookrightarrow H^i(X, L^q),
\]
for every $i$.  Iterating this argument gives split injections of $H^i(X,L)$ in $H^i(X, L^{q^r})$ for all positive integers $r$.  In particular, if $H^i(X, L^{q^r})$ vanishes for some $r$, as is the case when $L$ is ample, then $H^i(X,L)$ vanishes as well.

The proofs of the following five propositions are essentially identical to the standard proofs of the analogous results for Frobenius splittings, and are omitted.  See \cite{BrionKumar05}; Proposition~1.2.1, Theorem~1.2.8, and Exercises~1.5.E.1, 1.5.E.2, and 1.5.E.3, respectively, for the case where the line bundles in question are ample.  The extensions to nef bundles can be deduced following the arguments in \cite{Inamdar94}, using the fact that any nef line bundle on $X$ is the pullback of an ample line bundle on some toric variety $X'$ under a proper birational toric morphism $f: X \rightarrow X'$.

\begin{proposition} \label{components}
Let $Y$ and $Y'$ be toric varieties equivariantly embedded in $X$.  If $Y \cup Y'$ is split compatibly in $X$ then $Y$, $Y'$, and $Y \cap Y'$ are split compatibly in $X$.
\end{proposition}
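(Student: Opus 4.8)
The plan is to run, affine-locally on each $U_\sigma$, the toric translation of the standard argument for compatible Frobenius splittings \cite[Proposition~1.2.1]{BrionKumar05}, using the explicit description of $F_*\OO_X$ from Section~\ref{preliminaries}. So let $\pi$ be a splitting of $X$ compatible with $Y\cup Y'$, and let us show it is compatible with each of $Y$, $Y'$, and $Y\cap Y'$. Fix a cone $\sigma\in\Sigma$, write $R=\Z[\sigma^\vee\cap M]$ and $S=F_*\Z[U_\sigma]=\Z[\sigma^\vee\cap\tfrac1q M]$, and let $\rho\colon R\xrightarrow{\ \sim\ }S$ be the isomorphism $x^u\mapsto x^{u/q}$; under the identifications of Section~\ref{preliminaries} we have $F_*\mathcal I(U_\sigma)=\rho\bigl(\mathcal I(U_\sigma)\bigr)$ for every ideal sheaf $\mathcal I$, and $\pi$ is an $R$-linear retraction of the inclusion $R\hookrightarrow S$, where $R$ sits in $S$ as the span of the monomials $x^u$ with $u\in\sigma^\vee\cap M$. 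Put $a=\mathcal I_Y(U_\sigma)$ and $b=\mathcal I_{Y'}(U_\sigma)$; since $Y$ and $Y'$ are varieties these ideals are prime (or the unit ideal, when $Y$ resp.\ $Y'$ misses $U_\sigma$), and $\mathcal I_{Y\cup Y'}(U_\sigma)=a\cap b$ while $\mathcal I_{Y\cap Y'}(U_\sigma)=a+b$, so the hypothesis reads $\pi\bigl(\rho(a\cap b)\bigr)\subseteq a\cap b$.

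The substance is the claim $\pi(\rho(a))\subseteq a$; the statement for $b$ is symmetric, and then additivity of $\rho$ and of $\pi$ gives $\pi(\rho(a+b))=\pi(\rho(a))+\pi(\rho(b))\subseteq a+b$, i.e.\ compatibility with $Y\cap Y'$. If $a=R$ there is nothing to prove, and if $b=R$ the claim is exactly the hypothesis; so assume both $a$ and $b$ are proper, and assume $Y\not\subseteq Y'$ (the case relevant to the applications), which forces $b\not\subseteq a$. The endomorphism $F$ acts as the $q$-th power map on each orbit torus and hence preserves $Y'$, so $\varphi:=(F|_{U_\sigma})^{\#}\colon R\to R$, $x^u\mapsto x^{qu}$, maps $b$ into itself; note also $\rho(\varphi(g))=g$ for $g\in R$, regarded now as an element of $R\subseteq S$. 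Choose $g\in b\setminus a$; since $a$ is prime, $g$ is a nonzerodivisor modulo $a$. For any $h\in a$ we have $h\,\varphi(g)\in a\cap b$, so
\[
g\cdot\pi\bigl(\rho(h)\bigr)=\pi\bigl(g\cdot\rho(h)\bigr)=\pi\bigl(\rho(\varphi(g))\cdot\rho(h)\bigr)=\pi\bigl(\rho(h\,\varphi(g))\bigr)\in a\cap b\subseteq a,
\]
using in turn $R$-linearity of $\pi$, the identity $\rho(\varphi(g))=g$, multiplicativity of $\rho$, and the hypothesis. As $g$ is a nonzerodivisor modulo $a$, this yields $\pi(\rho(h))\in a$, proving the claim.

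I expect the one genuine point to be the displayed chain, and within it the device of replacing a chosen $g\in b\setminus a$ by $\varphi(g)$: multiplying its exponents by $q$ is precisely what is needed so that, after dividing exponents by $q$ via $\rho$, this auxiliary factor lands back in $R$ and the $R$-linearity of $\pi$ applies — the exact counterpart of the use of Frobenius-linearity of a splitting in the classical proof. The remaining ingredients (reduction to affine charts, the identities $\mathcal I_{Y\cup Y'}=\mathcal I_Y\cap\mathcal I_{Y'}$ and $\mathcal I_{Y\cap Y'}=\mathcal I_Y+\mathcal I_{Y'}$ for reduced subschemes, and the passage from $Y$ to $Y'$ to $Y\cap Y'$) are routine, and the argument is identical over $\Z$ and over any field.
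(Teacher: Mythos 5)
Your proof is correct and is precisely the adaptation of the Brion--Kumar argument (Proposition~1.2.1 in their book) that the paper has in mind when it declares these proofs ``essentially identical to the standard proofs'' and omits them: you replace Frobenius-linearity by $R$-linearity of $\pi$ together with the identity $g\cdot s = \rho(\varphi(g))\cdot s$ relating the $\OO_X$-module structure on $F_*\OO_X$ to ring multiplication, and you use that the toric endomorphism $F$ preserves every equivariantly embedded toric subvariety to get $\varphi(b)\subseteq b$. The one point worth flagging is that you correctly restrict to the case where neither of $Y$, $Y'$ contains the other (so that $g\in b\setminus a$ exists, and symmetrically); this hypothesis is in fact needed for the statement to hold, and is the case intended in the paper's applications where $Y$ and $Y'$ are distinct irreducible components of $Y\cup Y'$.
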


\begin{proposition}  \label{surjectivity and vanishing}
Let $Y$ be a compatibly split subvariety of $X$.  If $L$ is a nef line bundle on $X$ then the restriction map
\[
H^0(X,L) \rightarrow H^0(Y,L)
\]
is surjective and $H^1(X, I_Y \otimes L) = 0$.
\end{proposition}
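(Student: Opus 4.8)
The plan is to play the long exact cohomology sequence of
\[
0 \longrightarrow I_Y \otimes L \longrightarrow L \longrightarrow (i_Y)_*(L|_Y) \longrightarrow 0,
\]
where $i_Y \colon Y \hookrightarrow X$ is the inclusion, against the splitting--iteration argument recalled at the start of this section. Since $X$ is a complete toric variety and $L$ is nef, $H^1(X,L) = 0$, so this sequence identifies $H^1(X, I_Y \otimes L)$ with the cokernel of the restriction map $H^0(X,L) \to H^0(Y, L|_Y)$. Thus the two assertions are equivalent, and it suffices to prove that $H^1(X, I_Y \otimes L)$ vanishes.

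The first step is to observe that compatibility of $\pi$ with $Y$ exhibits $I_Y$ as a direct summand of $F_* I_Y$ as an $\OO_X$-module. On each invariant affine open $U_\sigma$ the map $F^* \colon \OO_X(U_\sigma) \to F_*\OO_X(U_\sigma) = \Z[\sigma^\vee \cap \tfrac1q M]$ carries $I_Y(U_\sigma)$ into $F_* I_Y(U_\sigma)$ --- this is immediate from the semigroup description above, since $F^*$ is the inclusion $x^u \mapsto x^u$ --- and by hypothesis $\pi$ carries $F_* I_Y(U_\sigma)$ back into $I_Y(U_\sigma)$. As the composition $\pi \circ F^*$ is the identity on $\OO_X$, its restriction to $I_Y$ is the identity on $I_Y$, so the restriction of $F^*$ to $I_Y$ is split by the restriction of $\pi$. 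Tensoring with $L$ and applying the projection formula together with the isomorphism $F^* L \cong L^q$ established above, $I_Y \otimes L$ becomes a direct summand of $F_* I_Y \otimes L \cong F_*(I_Y \otimes L^q)$. Because $F$ is finite, $H^1(X, F_*(I_Y \otimes L^q)) = H^1(X, I_Y \otimes L^q)$, so $H^1(X, I_Y \otimes L)$ is a direct summand of $H^1(X, I_Y \otimes L^q)$; iterating, it is a direct summand of $H^1(X, I_Y \otimes L^{q^r})$ for every $r \geq 1$.

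It then remains to see that $H^1(X, I_Y \otimes L^{q^r}) = 0$ for $r \gg 0$. When $L$ is ample this is immediate from Serre vanishing on $X$. When $L$ is merely nef, write $L = f^* L'$ for a proper birational toric morphism $f \colon X \to X'$ and an ample line bundle $L'$ on $X'$; then $L^{q^r} = f^*\bigl((L')^{q^r}\bigr)$, and one descends the cohomology to $X'$ --- using that $R^{>0}f_* \OO_X = 0$ for such $f$, since toric varieties are Cohen--Macaulay --- to reduce again to Serre vanishing on $X'$, following the argument of \cite{Inamdar94}. I expect this nef reduction to be the only delicate point; the splitting mechanics and the ample case are routine. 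Finally, given $H^1(X, I_Y \otimes L) = 0$, surjectivity of $H^0(X,L) \to H^0(Y, L|_Y)$ follows from the long exact sequence as noted above, completing the proof.
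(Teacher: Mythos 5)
Your argument reproduces the standard Frobenius-splitting proof that the paper itself cites (Brion--Kumar, Theorem~1.2.8, for ample $L$, and \cite{Inamdar94} for the nef reduction), so the overall approach is the intended one: reduce to $H^1(X, I_Y \otimes L)=0$, exhibit $I_Y$ as a direct summand of $F_* I_Y$, tensor with $L$, use the projection formula and $F^*L \cong L^q$, iterate, and invoke vanishing.

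Two steps are not quite justified as written, though both are recoverable. First, the inclusion $F^*(I_Y) \subset F_* I_Y$ is \emph{not} immediate from the semigroup description. In characteristic $p$ the absolute Frobenius is the identity on the underlying space, so $F^*f = f^p \in I_Y$ automatically; here, however, $F$ is the lattice-multiplication endomorphism, and under the identification $F_*\OO_X(U_\sigma) \cong \Z[\sigma^\vee \cap \tfrac1q M]$ sending $x^u \mapsto x^{u/q}$, the subsheaf $F_* I_Y$ is the image of $I_Y$ under $x^u \mapsto x^{u/q}$, not $I_Y$ itself. Tracing through the identifications, $F^*(I_Y) \subset F_* I_Y$ is equivalent to the geometric condition $F(Y) \subset Y$. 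This holds when $Y$ is the closure of a subtorus of a $T$-orbit (the only case the paper uses: diagonals and semidiagonals), but it needs to be stated and checked rather than asserted as immediate, since it fails for an arbitrary closed subvariety. Second, in your nef reduction the sheaves whose cohomology must be descended along $f\colon X \to X'$ are $I_Y \otimes L^{q^r}$, not $L^{q^r}$, so quoting $R^{>0}f_*\OO_X = 0$ alone is insufficient; one must also control $R^{\bullet}f_*\OO_Y$ (equivalently $R^{\bullet}f_* I_Y$) and compare $Y$ with $Y' = f(Y)$, which is the substantive content of Inamdar's argument. You correctly flag the nef case as the delicate point, but as written that step is still a sketch rather than a proof.
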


\begin{proposition}
Let $L_1, \ldots, L_r$ be nef line bundles on $X$.  If the diagonal is compatibly split in $X \times X$  then the section ring $R(L_1, \ldots, L_r)$ is normally generated.
\end{proposition}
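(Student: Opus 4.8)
The plan is to reduce normal generation of $R(L_1,\ldots,L_r)$ to the surjectivity of the multiplication maps
\[
H^0(X, L) \otimes_\Z H^0(X, M) \longrightarrow H^0(X, L \otimes M)
\]
for arbitrary nef line bundles $L$ and $M$ on $X$, and then to prove this surjectivity using the compatibly split diagonal. For the reduction: the degree one part of the section ring is $R_1 = \bigoplus_{i=1}^r H^0(X,L_i)$, and $R(L_1,\ldots,L_r)$ is generated over $R_0 = \Z$ by $R_1$ precisely when $R_1 \cdot R_{d-1} = R_d$ for all $d \geq 2$. Given $\gamma \in \N^r$ with $|\gamma| = d \geq 2$, I would choose an index $i$ with $\gamma_i \geq 1$, put $\beta = \gamma - e_i$, and note that $L_i$ and $L_1^{\beta_1} \otimes \cdots \otimes L_r^{\beta_r}$ are both nef; surjectivity of multiplication then shows that the corresponding summand $H^0(X, L_1^{\gamma_1} \otimes \cdots \otimes L_r^{\gamma_r})$ of $R_d$ lies in the image of $R_1 \otimes R_{d-1}$. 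Induction on $d$ gives $R(L_1,\ldots,L_r) = \Z[R_1]$, which is normal generation.

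For the surjectivity of multiplication, fix nef line bundles $L$ and $M$ on $X$ and work on the complete toric variety $X \times X$ with its two projections $p_1, p_2$. Set $L \boxtimes M = p_1^* L \otimes p_2^* M$, which is nef since pullbacks of nef line bundles are nef and tensor products of nef line bundles are nef. The diagonal $\Delta \subset X \times X$ is the closure of the image of the diagonal embedding $T \hookrightarrow T \times T$, hence is a toric variety equivariantly embedded in $X \times X$ in the sense of Section~\ref{preliminaries}, and by hypothesis it is compatibly split. Since $(L \boxtimes M)|_\Delta \cong L \otimes M$ under $\Delta \cong X$, the ideal sheaf sequence of $\Delta$ gives an exact sequence
\[
H^0(X \times X,\, L \boxtimes M) \to H^0(X, L \otimes M) \to H^1\bigl(X \times X,\, I_\Delta \otimes (L \boxtimes M)\bigr).
\]
Applying Proposition~\ref{surjectivity and vanishing} with ambient variety $X \times X$, compatibly split subvariety $\Delta$, and nef line bundle $L \boxtimes M$, the rightmost group vanishes. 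Since $H^0$ of a nef line bundle on a complete toric variety over $\Z$ is a free $\Z$-module, with basis indexed by the lattice points of the associated polytope, cohomology and base change applies and the Künneth formula identifies $H^0(X \times X, L \boxtimes M)$ with $H^0(X, L) \otimes_\Z H^0(X, M)$, the first arrow above being exactly the multiplication map. Surjectivity follows.

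I do not expect a serious obstacle here, since this is a direct transcription of the classical argument; the points needing care are bookkeeping. One must verify that $\Delta \hookrightarrow X \times X$ is of the type covered by Proposition~\ref{surjectivity and vanishing}, namely the closure of a subtorus of an orbit, which is why I identify it with the image of $T \hookrightarrow T \times T$. One must check the Künneth identification over $\Z$ rather than over a field, which is why the freeness of $H^0$ on complete toric varieties is invoked. And one must confirm that $L \boxtimes M$ is nef on $X \times X$ so that Proposition~\ref{surjectivity and vanishing} applies, which follows from the stability of nefness under pullback and tensor product. Once all three are in place, the induction in the first paragraph finishes the proof.
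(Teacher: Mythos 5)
The paper omits this proof, citing \cite{BrionKumar05} (Exercise 1.5.E.1) for the ample case and \cite{Inamdar94} for the nef extension; your argument is a correct and complete transcription of exactly that standard route, reducing normal generation to surjectivity of multiplication maps and obtaining that surjectivity from the restriction sequence of the compatibly split diagonal in $X\times X$ via Proposition~\ref{surjectivity and vanishing} and K\"unneth. Your bookkeeping (nefness of $L\boxtimes M$, $\Delta$ as closure of the diagonal subtorus, freeness of $H^0$ over $\Z$ to justify K\"unneth) is the right set of checks, and using the already-stated nef version of Proposition~\ref{surjectivity and vanishing} directly is a clean way to handle the nef case without re-invoking Inamdar's reduction here.
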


\begin{proposition}  \label{normal presentation}
Let $L_1, \ldots, L_r$ be nef line bundles on $X$.  If the union of $\Delta \times X$ and $X \times \Delta$ is compatibly split in $X \times X \times X$ then the section ring $R(L_1, \ldots, L_r)$ is normally presented.
\end{proposition}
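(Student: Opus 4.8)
The plan is to adapt the classical Frobenius-splitting proof of normal presentation of section rings (Ramanan--Ramanathan, Mehta--Ramanathan; see \cite{BrionKumar05}), working directly with the multigraded ring $R = R(L_1, \ldots, L_r)$. Write $R_\alpha = H^0(X, L^\alpha)$ for $\alpha \in \N^r$, where $L^\alpha = L_1^{\alpha_1} \otimes \cdots \otimes L_r^{\alpha_r}$. I would first record that $R$ is normally generated: since $\Delta \times X \cup X \times \Delta$ is compatibly split in $X^3$, Proposition~\ref{components} shows that $\Delta \times X$ and the small diagonal $\Delta_{123} = (\Delta \times X) \cap (X \times \Delta)$ are compatibly split in $X^3$, and restricting the splitting of $X^3$ to $\Delta \times X$ yields a splitting of $\Delta \times X$ compatible with $\Delta_{123}$; under the isomorphism $\Delta \times X \cong X \times X$ this is a compatible splitting of the diagonal of $X \times X$, so the preceding proposition applies and every multiplication $R_\alpha \otimes R_\beta \to R_{\alpha + \beta}$ is surjective.

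To control the relations, I would prove: for every $\delta \in \N^r$ with $|\delta| = n \geq 3$ and every $i$ with $\delta_i \geq 1$, the kernel of the multiplication $R_{e_i} \otimes R_{\delta - e_i} \to R_\delta$ is spanned by degree-two relations extended by elements of degree $n - 2$. A standard homological computation with the syzygies of a polynomial ring (see \cite[Exercise~1.5.E.2]{BrionKumar05}) shows that this condition, together with normal generation, implies that $R$ is normally presented. Fix $\delta$ and $i$, choose $j$ with $(\delta - e_i)_j \geq 1$, and put $\gamma = \delta - e_i - e_j \in \N^r$. On the complete toric variety $W = X^3$, with projections $p_1, p_2, p_3$ onto $X$, form the nef line bundle $\mathcal{L} = p_1^* L_i \otimes p_2^* L_j \otimes p_3^* L^\gamma$. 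Writing $\Delta_{12} = \Delta \times X$ and $\Delta_{23} = X \times \Delta$, a computation in toric coordinates on the affine charts gives $I_{\Delta_{12}} + I_{\Delta_{23}} = I_{\Delta_{123}}$, both being the ideal of the reduced small diagonal, so there is a short exact sequence
\[
0 \to I_{\Delta_{12} \cup \Delta_{23}} \otimes \mathcal{L} \to (I_{\Delta_{12}} \otimes \mathcal{L}) \oplus (I_{\Delta_{23}} \otimes \mathcal{L}) \to I_{\Delta_{123}} \otimes \mathcal{L} \to 0.
\]
Since $\Delta_{12} \cup \Delta_{23}$ is compatibly split in $W$ and $\mathcal{L}$ is nef, Proposition~\ref{surjectivity and vanishing} gives $H^1(W, I_{\Delta_{12} \cup \Delta_{23}} \otimes \mathcal{L}) = 0$, so the map $H^0(W, I_{\Delta_{12}} \otimes \mathcal{L}) \oplus H^0(W, I_{\Delta_{23}} \otimes \mathcal{L}) \to H^0(W, I_{\Delta_{123}} \otimes \mathcal{L})$ is surjective.

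It remains to unwind this using the Künneth formula, which holds over $\Z$ because the relevant cohomology groups are free, with bases indexed by lattice points of the defining polytopes. One gets $H^0(W, \mathcal{L}) = R_{e_i} \otimes R_{e_j} \otimes R_\gamma$, with restriction to $\Delta_{123} \cong X$ the triple multiplication to $R_\delta$, restriction to $\Delta_{12} \cong X^2$ the map $\mu \otimes \mathrm{id}$ where $\mu \colon R_{e_i} \otimes R_{e_j} \to R_{e_i + e_j}$, and restriction to $\Delta_{23}$ the map $\mathrm{id} \otimes \mu'$ where $\mu' \colon R_{e_j} \otimes R_\gamma \to R_{e_j + \gamma}$. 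As tensoring with a free module is exact, $\ker(\mu \otimes \mathrm{id}) = \ker \mu \otimes R_\gamma$ and $\ker(\mathrm{id} \otimes \mu') = R_{e_i} \otimes \ker \mu'$, so the surjectivity above becomes
\[
\ker(R_{e_i} \otimes R_{e_j} \otimes R_\gamma \to R_\delta) = \ker \mu \otimes R_\gamma + R_{e_i} \otimes \ker \mu'.
\]
The triple multiplication factors as $R_{e_i} \otimes R_{e_j} \otimes R_\gamma \xrightarrow{\mathrm{id} \otimes \mu'} R_{e_i} \otimes R_{\delta - e_i} \to R_\delta$, and $\mathrm{id} \otimes \mu'$ is surjective with kernel $R_{e_i} \otimes \ker \mu'$; applying it to the displayed identity kills the second summand and shows that $\ker(R_{e_i} \otimes R_{\delta - e_i} \to R_\delta)$ is the image of $\ker \mu \otimes R_\gamma$ --- that is, it is spanned by elements $\sum_k a_k \otimes b_k c$ with $\sum_k a_k b_k = 0$ in $R_{e_i + e_j}$ and $c \in R_\gamma$, which are degree-two relations extended by elements of degree $n - 2$. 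Letting $i$, $j$, and $\delta$ vary gives the condition isolated above.

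The step I expect to be the main obstacle is the closing homological bookkeeping: checking carefully that the per-multidegree statements above assemble, through the polynomial-ring syzygy computation, into the standard criterion for normal presentation of a normally generated graded ring, and confirming the precise form of each restriction map on global sections. The geometric core --- the Mayer--Vietoris sequence of ideal sheaves of $\Delta_{12}$, $\Delta_{23}$, $\Delta_{123}$ together with the cohomology vanishing --- is routine once one has the hypothesis and Proposition~\ref{surjectivity and vanishing}.
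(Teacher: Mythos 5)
Your argument is correct and reproduces the standard Mehta--Ramanathan/Brion--Kumar proof that the paper delegates to \cite[Exercise~1.5.E.2]{BrionKumar05}: one deduces normal generation from the induced compatible splitting of the diagonal in $X\times X$, and then controls the relations via the Mayer--Vietoris sequence of ideal sheaves of $\Delta\times X$, $X\times\Delta$, and the small diagonal together with the $H^1$-vanishing from Proposition~\ref{surjectivity and vanishing}, which is exactly the intended route. The remaining multigraded bookkeeping you flag is genuinely the same as in the cited references (the multigraded version appearing in \cite{Inamdar94}) and is not a gap.
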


\begin{proposition} \label{Koszul}
Let $L_1, \ldots, L_r$ be nef line bundles on $X$.  If $\Delta_1 \cup \cdots \cup \Delta_{n-1}$ is compatibly split in $X^n$ for every $n$ then the section ring $R(L_1, \ldots, L_r)$ is normally presented and Koszul.
\end{proposition}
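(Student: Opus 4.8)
The plan is to verify the Koszul property directly; the normal presentation is already covered, since the case $n = 3$ of the hypothesis is precisely the hypothesis of Proposition~\ref{normal presentation}, so $R := R(L_1, \dots, L_r)$ is normally presented, and in particular generated in degree one. Being generated in degree one, $R$ has $\mathrm{Tor}^R_i(k,k)_j = 0$ for $j < i$ automatically, so it remains to prove $\mathrm{Tor}^R_i(k,k)_j = 0$ for all $j > i$. I will compute these groups from the reduced bar complex, realized geometrically on powers of $X$. For clarity I treat the case $r = 1$, writing $L = L_1$ and $R = \bigoplus_{m \ge 0} H^0(X, L^m)$; the case of several nef bundles runs along identical lines over a more elaborate index set, as noted at the end.

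For $j \ge 1$ put $\mathcal{L}_j = L \boxtimes \cdots \boxtimes L$ on $X^j$, and for $S \subseteq \{1, \dots, j-1\}$ let $\Delta_S \subseteq X^j$ be the partial diagonal identifying the consecutive coordinates indexed by $S$; thus $\Delta_{\{i\}} = \Delta_i$ is the $i$-th semidiagonal, $\Delta_S = \bigcap_{i \in S} \Delta_i$, and $\Delta_S \cong X^{\,j - |S|}$ is a complete toric variety on which $\mathcal{L}_j$ restricts to an external product of positive powers of $L$, hence to a nef line bundle. In internal degree $j$, the reduced bar complex computing $\mathrm{Tor}^R(k,k)$ has in homological degree $i$ the space $\bigoplus R_{a_1} \otimes \cdots \otimes R_{a_i}$, summed over compositions $a_1 + \cdots + a_i = j$; each summand is canonically $H^0(\Delta_S, \mathcal{L}_j|_{\Delta_S})$ for the partial diagonal with consecutive blocks of lengths $a_1, \dots, a_i$, so that $|S| = j - i$, and the bar differential -- multiplication of adjacent tensor factors -- is the \v{C}ech differential of the closed cover of $Z_j := \Delta_1 \cup \cdots \cup \Delta_{j-1}$ by the semidiagonals $\Delta_i$. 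Hence, in internal degree $j$, the bar complex is the complex of global sections, twisted by $\mathcal{L}_j$, of the augmented \v{C}ech complex
\[
\OO_{X^j} \longrightarrow \bigoplus_{|S| = 1} \OO_{\Delta_S} \longrightarrow \bigoplus_{|S| = 2} \OO_{\Delta_S} \longrightarrow \cdots \longrightarrow \OO_{\Delta},
\]
$\Delta$ being the small diagonal, with $\OO_{X^j}$ in homological degree $j$ and $\OO_{\Delta}$ in homological degree $1$.

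The argument then rests on three inputs. (i) \emph{This complex of sheaves is a resolution of the ideal sheaf $I_{Z_j}$.} For any two closed subschemes the sequence $0 \to \OO_{Y \cup Y'} \to \OO_Y \oplus \OO_{Y'} \to \OO_{Y \cap Y'} \to 0$ is exact, and compatibility of the splitting (Proposition~\ref{components}, applied repeatedly to the $\Delta_i$) ensures that every scheme occurring here -- the $\Delta_S$, the intersections $\Delta_S \cap \Delta_{S'} = \Delta_{S \cup S'}$, and the partial unions of semidiagonals -- carries its reduced structure; an induction on $j$ using the identities $Z_j = \Delta_1 \cup (X \times Z_{j-1})$ and $\Delta_1 \cap (X \times Z_{j-1}) \cong Z_{j-1}$ then gives exactness. (ii) \emph{Each term $\OO_{\Delta_S} \otimes \mathcal{L}_j$ is acyclic for global sections}, being a nef line bundle on a complete toric variety. (iii) \emph{$H^0(X^j, \mathcal{L}_j)$ surjects onto $H^0(Z_j, \mathcal{L}_j)$ and $H^{>0}(Z_j, \mathcal{L}_j) = 0$}; more precisely $H^{>0}(Z_j, \mathcal{N}|_{Z_j}) = 0$ for every external product $\mathcal{N}$ of nef line bundles on $X$. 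The surjectivity and $H^1(X^j, I_{Z_j} \otimes \mathcal{L}_j) = 0$ are Proposition~\ref{surjectivity and vanishing}, and the remaining vanishing follows by the same induction on $j$, using the three-term sequence of (i) twisted by $\mathcal{N}$, the K\"unneth formula, input (ii), and Proposition~\ref{surjectivity and vanishing} applied to $Z_{j-1} \subseteq \Delta_1 \cong X^{j-1}$.

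Granting these, input (ii) lets one compute the hypercohomology of the twisted complex of (i) both as the cohomology of its complex of global sections -- that is, as $\mathrm{Tor}^R(k,k)_j$ under the indexing above -- and, by (i), as $H^{\bullet}(X^j, I_{Z_j} \otimes \mathcal{L}_j)$; comparing, $\mathrm{Tor}^R_i(k,k)_j \cong H^{\,j-i}(X^j, I_{Z_j} \otimes \mathcal{L}_j)$. Since $\mathcal{L}_j$ is nef on the complete toric variety $X^j$, the ideal sheaf sequence $0 \to I_{Z_j} \otimes \mathcal{L}_j \to \mathcal{L}_j \to \OO_{Z_j} \otimes \mathcal{L}_j \to 0$ together with (iii) gives $H^1(X^j, I_{Z_j} \otimes \mathcal{L}_j) = 0$ and $H^p(X^j, I_{Z_j} \otimes \mathcal{L}_j) \cong H^{p-1}(Z_j, \mathcal{L}_j) = 0$ for $p \ge 2$; hence $\mathrm{Tor}^R_i(k,k)_j = 0$ for all $i < j$, and $R$ is Koszul. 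For several nef line bundles one repeats the argument verbatim, replacing $X^j$, for each multidegree $\beta$, by the products $X^i$ indexed by the ordered decompositions of $\beta$ into $i$ nonzero parts and linked by the coordinate-merging maps, and replacing the $\Delta_i$ by the corresponding unions of consecutive diagonals -- which are again compatibly split by hypothesis -- so no new ingredient is needed. The main obstacle is input (i): showing that the closed \v{C}ech complex of this arrangement of iterated diagonals is exact. This is precisely where one must use the compatibility of the splittings rather than merely their existence, together with a careful treatment of the combinatorics of diagonals and the induction; for nef rather than ample line bundles, setting this up cleanly is in essence the content of the arguments of Inamdar invoked above.
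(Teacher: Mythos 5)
The paper omits the proof, pointing to \cite[Exercise~1.5.E.3]{BrionKumar05} for the ample case and to Inamdar's arguments for the extension to nef bundles; the argument intended is exactly the Inamdar--Mehta/Bezrukavnikov strategy you reconstruct (realizing the reduced bar complex in internal degree $j$ as the global sections of the closed \v{C}ech complex of the semidiagonals on $X^j$, twisted by $\mathcal{L}_j$, then using compatible splitting for the exactness of that complex and Proposition~\ref{surjectivity and vanishing} plus nef toric vanishing for the cohomological inputs). Your proposal is correct and follows this same route; you also rightly flag the two points that require the most care -- exactness of the closed \v{C}ech complex for the arrangement of diagonals (which is where compatibility, not mere existence, of the splitting enters) and the bookkeeping for several nef bundles $L_1,\ldots,L_r$ -- and these are precisely the places the cited references handle in detail.
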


\section{Canonical splittings}

Every toric variety has a splitting, and among all splittings of $X$ there is a unique one that extends to every toric compactification $X' \supset X$ and lifts to every proper birational toric modification $X'' \rightarrow X'$ of such a compactification.  If $q$ is prime and $k$ is the field with $q$ elements, then the restriction of this splitting to $X_k$ is the unique Frobenius splitting that is canonical in the sense of Mathieu \cite[Chapter~4]{BrionKumar05}.  We now describe this \emph{canonical splitting}, starting with its restriction to the dense torus $T$.

Let $\pi_0$ be the map of $\Z[T]$-modules from $F_* \Z[T]$ to $\Z[T]$ given by
\[
\pi_0 (x^u) = \left\{
\begin{array}{ll} x^u & \mbox{ if } u \in M, \\
			 0 & \mbox{ otherwise.}
			 \end{array}
			 \right.
\]			
The pullback map $F^*: \Z[T] \rightarrow F_* \Z[T]$ is induced by the inclusion of $M$ in $\frac{1}{q}M$; if $q$ is prime and $k$ is the field with $q$ elements, then the induced map $k[T] \rightarrow F_* k[T]$ may be identified with the inclusion of $k[T]$ in $k[T]^{1/q}$.

In particular, $\pi_0 \circ F_*$ is the identity, and hence gives a splitting of $T$.  

\begin{proposition}
For any toric variety $X$, $\pi_0$ extends to a splitting of $X$.
\end{proposition}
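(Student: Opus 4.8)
The plan is to check directly, cone by cone, that the map $\pi_0$ defined on the dense torus restricts to a well-defined $\OO_X$-module splitting on each invariant affine open $U_\sigma$, and that these local splittings agree on overlaps so that they glue. For a cone $\sigma \in \Sigma$, recall that $F_*\Z[U_\sigma] = \Z[\sigma^\vee \cap \tfrac1q M]$, while $\Z[U_\sigma] = \Z[\sigma^\vee \cap M]$. I would define $\pi_{0,\sigma}$ on monomials by $\pi_{0,\sigma}(x^u) = x^u$ if $u \in M$ and $0$ otherwise, exactly as for the torus, and then verify three things: (i) $\pi_{0,\sigma}$ lands in $\Z[\sigma^\vee \cap M]$, i.e.\ whenever $u \in \sigma^\vee \cap \tfrac1q M$ happens to lie in $M$, it already lies in $\sigma^\vee \cap M$ — this is immediate since $u \in \sigma^\vee$ is part of the hypothesis; (ii) $\pi_{0,\sigma}$ is $\Z[U_\sigma]$-linear for the module structure $x^{u} \cdot x^{u'} = x^{u+u'}$ with $u \in M$, $u' \in \tfrac1q M$ — this follows because $u + u' \in M$ if and only if $u' \in M$, so the "if $u \in M$" condition is preserved under translation by lattice points; (iii) $\pi_{0,\sigma} \circ F^* = \mathrm{id}$, which holds because $F^*$ is induced by the inclusion $M \hookrightarrow \tfrac1q M$, so every monomial in its image already has exponent in $M$ and is fixed by $\pi_{0,\sigma}$.

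Having the local splittings, the next step is to check compatibility under restriction to a face $\tau \preceq \sigma$, which corresponds to a localization $\Z[\sigma^\vee \cap M] \to \Z[\tau^\vee \cap M]$ (inverting $x^{u_0}$ for a suitable $u_0 \in \sigma^\vee \cap \tau^\perp$), and similarly for the $\tfrac1q M$ version. Here one observes that the membership condition "$u \in M$" is unaffected by adding or subtracting multiples of $u_0 \in M$, so $\pi_{0,\tau}$ agrees with the localization of $\pi_{0,\sigma}$. This gives a well-defined global map $\pi_0 : F_*\OO_X \to \OO_X$ of $\OO_X$-modules whose composition with $F^*$ is the identity on each chart, hence globally; that is exactly a splitting of $X$.

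The step I expect to require the most care is (iii) combined with the gluing, not because it is deep but because one must be careful that the identification $F_*\Z[U_\sigma] = \Z[\sigma^\vee \cap \tfrac1q M]$ (sending $x^u \mapsto x^{u/q}$, as fixed in Section~\ref{preliminaries}) is used consistently: under that identification the $\OO_X$-linearity condition is precisely the translation-invariance of the defining condition on exponents, and everything reduces to the elementary fact that $\tfrac1q M / M$-membership is a torsor-like condition stable under $M$-translation. There is no real obstacle — the argument is essentially a restriction of the torus computation already carried out before the proposition, together with the observation that intersecting with $\sigma^\vee$ does not interfere with any of the relevant exponent conditions — but spelling out the gluing on overlaps is the one place where I would write the details carefully rather than assert them.
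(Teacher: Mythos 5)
Your proposal is correct and takes essentially the same approach as the paper: the paper's proof simply observes that $\pi_0 \circ F^*$ is the identity and that $\pi_0$ carries $\Z[\sigma^\vee \cap \tfrac1q M]$ into $\Z[\sigma^\vee \cap M]$ for each cone $\sigma$. You have merely written out the $\OO_X$-linearity and gluing checks that the paper treats as routine and leaves implicit.
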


\begin{proof}
The composition $\pi_0 \circ F^*$ is the identity, and for each affine open $U_\sigma$, $\pi_0$ maps $\Z[\sigma^\vee \cap \frac{1}{q}M]$ into $\Z[\sigma^\vee \cap M]$.
\end{proof}

\noindent Properties of this canonical splitting $\pi_0$ are closely related to Smith's proof that toric varieties are globally $F$-regular \cite[Proposition~6.3]{Smith00}.  

\begin{proposition}
The canonical splitting $\pi_0$ is compatible with every $T$-invariant subvariety.
\end{proposition}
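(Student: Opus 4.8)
The plan is to verify the defining condition $\pi_0(F_*I_Y)\subseteq I_Y$ one invariant affine chart at a time, using the reformulation recorded in Section~\ref{preliminaries}. Fix $\sigma\in\Sigma$. On $U_\sigma$ the condition is that the composite
\[
\Z[U_\sigma]\xrightarrow{\ \sim\ }\Z[\sigma^\vee\cap\tfrac{1}{q}M]\xrightarrow{\ \pi_0\ }\Z[U_\sigma],
\]
in which the first arrow is the identification $x^u\mapsto x^{u/q}$, carries $I_Y(U_\sigma)$ into itself. First I would unwind this composite on monomials: it sends $x^u$ to $x^{u/q}$ when $u/q\in M$, i.e.\ when $u\in qM$, and to $0$ otherwise, since $\pi_0$ fixes the fractional monomials that lie in $M$ and annihilates the rest. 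Because $Y$ is $T$-invariant, $I_Y(U_\sigma)$ is a monomial ideal, so it suffices to prove the following: if $x^u\in I_Y(U_\sigma)$ with $u\in qM$, then $x^{u/q}\in I_Y(U_\sigma)$ as well.

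The second step is where reducedness of $Y$ enters, so that $I:=I_Y(U_\sigma)$ is a \emph{radical} monomial ideal of $\Z[U_\sigma]=\Z[\sigma^\vee\cap M]$. Suppose $x^u\in I$ with $u\in qM$. Since $\sigma^\vee$ is a cone, $u/q=\tfrac{1}{q}u$ still lies in $\sigma^\vee$, and it lies in $M$ by hypothesis, so $x^{u/q}$ is an honest element of $\Z[\sigma^\vee\cap M]$ and $(x^{u/q})^q=x^u\in I$. Radicality of $I$ then forces $x^{u/q}\in I$, which is exactly what is needed. Running this over all $\sigma$ shows that $\pi_0$ is compatible with $Y$.

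I do not expect a real obstacle here; the argument is just the two short observations above — the description of the map induced by $\pi_0$ on each $\Z[U_\sigma]$, and the fact that a reduced $T$-invariant subvariety has radical monomial local ideals. The one point worth emphasizing is that reducedness is genuinely used: $\pi_0$ need not be compatible with non-reduced $T$-invariant subschemes — for instance on $\Spec\Z[x,y]$ with $q=2$ the composite above sends $x^2\in(x^2)$ to $x\notin(x^2)$ — so the step invoking radicality is the only place the argument could fail if one were careless about hypotheses. (Alternatively one can avoid invoking reducedness explicitly: write $I_Y(U_\sigma)$ as the intersection of the prime monomial ideals of the orbit closures $V(\tau)$ contained in $Y$, reduce to $Y=V(\tau)$ — whose local exponent set $\{u\in\sigma^\vee\cap M:\langle u,v\rangle>0\ \text{ for some }v\in\tau\}$ is visibly stable under $u\mapsto u/q$ — and use that both $F_*$ and $\pi_0$ respect inclusions and finite intersections.)
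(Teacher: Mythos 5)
Your argument is correct, but it takes a different route from the paper's. The paper first shows that $\pi_0$ is compatible with the \emph{union of all $T$-invariant prime divisors}, by observing that on each chart $\pi_0$ preserves the ideal $\Z[\interior(\sigma^\vee)\cap M]$ of that union, and then invokes Proposition~\ref{components} to propagate compatibility to every $T$-invariant subvariety, since each is an intersection of invariant divisors. You instead check the compatibility condition $\pi_0(F_*I_Y)\subseteq I_Y$ directly on each chart: because $Y$ is $T$-invariant the local ideal is monomial, and because $\pi_0$ sends a monomial either to $0$ or to a $q$-th root $x^{u/q}$ of it, radicality of $I_Y(U_\sigma)$ finishes the argument in one stroke. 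What your approach buys is self-containment (no appeal to Proposition~\ref{components}) and a crisp isolation of the two facts actually at work — $\pi_0$ is a ``rooting'' operator and invariant ideals are radical monomial ideals — together with the clarifying observation that reducedness is genuinely needed, as your $\Z[x,y]$, $q=2$ example shows. What the paper's approach buys is economy: it reuses a standard lemma already proved for other purposes and sidesteps any discussion of radicality (which, over $\Z$ rather than a field, you are implicitly taking for granted as part of what ``$T$-invariant subvariety'' means — your parenthetical alternative, reducing to the prime ideals of orbit closures $V(\tau)$ and checking that their exponent sets are stable under $u\mapsto u/q$, is the cleanest way to dispel that worry and is closest in spirit to the paper's reduction).
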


\begin{proof}
First we claim that $\pi_0$ is compatible with the union of the $T$-invariant divisors.  To see this, note that the ideal sheaf $I$ of the union of the invariant divisors is given by
\[
I(U_\sigma) = \Z[ \interior(\sigma^\vee) \cap M],
\]
where $\interior(\sigma^\vee)$ is the interior of $\sigma^\vee$.  If $u$ is a fractional lattice point in the interior of $\sigma^\vee$ then $\pi_0(x^u)$ is either zero or $x^u$, and so is contained in $I(U_\sigma)$, which proves the claim.  The proposition then follows from Proposition~\ref{components}, since every $T$-invariant subvariety is an intersection of invariant divisors.  
\end{proof}

\noindent However, if $X$ is positive dimensional then the canonical splitting $\pi_0$ of $X \times X$ is not compatible with the diagonal $\Delta$.  To see this, observe that if $u \in \frac{1}{q}M$ is not in $M$, then $1 - x^u \otimes x^{-u}$ is in $F_* I_\Delta$, but $\pi_0 (1 - x^{u} \otimes x^{-u}) = 1$, which is not in $I_\Delta$. To apply the standard techniques relating splittings to section rings of ample line bundles discussed in Section~\ref{preliminaries}, we must look for other splittings of $X \times X$, and $X^r$ for $r$ greater than two, that are compatible with the diagonal and the union of the large semidiagonals, respectively.  

\section{Splittings of diagonals} \label{diagonal section}

We now describe the space of all splittings of a toric variety and use this description to characterize diagonally split toric varieties.  First, it is helpful to consider the structure of $F_* \OO_X$ as an $\OO_X$-module in more detail.

Recall that an equivariant structure, or $T$-linearization, on a coherent sheaf $\cF$ on $X$ is an isomorphism of sheaves on $T \times X$,
\[
\varphi: \mu^*\cF \rightarrow p^*\cF,
\]
where $\mu : T \times X \rightarrow X$ is the torus action and $p$ is the second projection, that satisfies the usual cocycle condition \cite[Section~2.1]{BrionKumar05}.  For example, the natural equivariant structure on $\OO_X$ is given by $1 \otimes x^u \mapsto x^{-u} \otimes x^u$.  In general, the push forward of an equivariant sheaf under an equivariant morphism does not carry a natural equivariant structure.  However, the equivariant endomorphism $F$ has the property that $F_* \OO_X$ is \emph{equivariantizable} \cite{Bogvad98, Thomsen00}; it is possible to choose an equivariant structure as follows.  First, choose representatives $u_1, \ldots, u_s$ of the cosets in $\frac{1}{q}M /M$.  Let $\varphi$ be the map from $\mu^* F_* \OO_X$ to $p^* F_* \OO_X$ that takes $1 \otimes x^{u}$ to $x^{u_i - u} \otimes x^{u}$, for $u$ in the coset $u_i + M$.  It is straightforward to check that $\varphi$ is an isomorphism and gives an equivariant structure on $F_* \OO_X$, as required.

\vspace{10 pt}

A splitting of $X$ restricts to a splitting of $T$, and two splittings of $X$ agree if and only if they agree on $T$, so we describe the space of all splittings of $X$ in terms of splittings of $T$ that extend to $X$, as follows.  For fractional lattice points $a \in \frac{1}{q} M$, let
\[
\pi_a : F_*\Z[T] \rightarrow \Z[T]
\] 
be the map given by
\[
\pi_a (x^u) = \left\{
\begin{array}{ll} x^{a+u} & \mbox{ if } a + u \mbox{ is in } M, \\
			 0 & \mbox{ otherwise.}
			 \end{array}
			 \right.
\]

\begin{lemma}
The set of maps $\pi_a$, for $a$ in $\frac{1}{q}M$, is a $\Z$-basis for $\Hom(F_* \Z[T], \Z[T])$.
\end{lemma}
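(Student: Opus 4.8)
The plan is to understand an arbitrary $\Z$-module homomorphism $\psi : F_*\Z[T] \to \Z[T]$ by recording its effect on the monomial basis $\{x^u : u \in \frac{1}{q}M\}$ of $F_*\Z[T]$, and then to use $\Z[T]$-linearity together with the equivariant structure on $F_*\OO_X$ to pin it down. First I would observe that $F_*\Z[T]$ is a free $\Z[T]$-module with basis $x^{u_1}, \ldots, x^{u_s}$, where $u_1, \ldots, u_s$ are coset representatives for $\frac{1}{q}M/M$: indeed every $u \in \frac{1}{q}M$ is uniquely $u_i + m$ with $m \in M$, and $x^u = x^m \cdot x^{u_i}$. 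Hence $\psi$ is determined by the $s$ Laurent polynomials $\psi(x^{u_i}) \in \Z[T]$, and conversely any choice of these extends uniquely to a $\Z[T]$-module map; so $\Hom_{\Z[T]}(F_*\Z[T], \Z[T])$ is free of rank $s$.

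The key point is then to check that each $\pi_a$ is $\Z[T]$-linear and that the $\pi_a$, as $a$ ranges over $\frac{1}{q}M$, span this $\Hom$ exactly. For linearity: for $m \in M$ and $u \in \frac{1}{q}M$ one has $m + u \in M$ iff $a + (m+u) \in a + M$... more precisely $\pi_a(x^m \cdot x^u) = \pi_a(x^{m+u})$, which equals $x^{a+m+u}$ if $a+m+u \in M$ and $0$ otherwise; since $m \in M$ this is exactly $x^m \cdot \pi_a(x^u)$ in both cases, so $\pi_a$ is $\Z[T]$-linear. Next, grouping the $a \in \frac{1}{q}M$ by their coset $a \equiv -u_i \pmod M$: for such $a$, the map $\pi_a$ sends $x^{u_i}$ to $x^{a + u_i} \in \Z[T]$ (a genuine monomial in $M$, since $a + u_i \in M$) and sends $x^{u_j}$ to $0$ for $j \ne i$ (because $a + u_j \notin M$). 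As $a$ runs over the coset $-u_i + M$, the monomial $x^{a+u_i}$ runs over all of $\{x^m : m \in M\}$, i.e. over a $\Z$-basis of $\Z[T]$. Therefore the $\pi_a$ with $a \equiv -u_i$ give maps picking out, in the $i$-th coordinate of the rank-$s$ free module $\Hom_{\Z[T]}(F_*\Z[T],\Z[T]) \cong \Z[T]^s$, precisely the standard monomial basis of that $i$-th $\Z[T]$-summand, and zero in the other coordinates. Running over all $i$, the collection $\{\pi_a : a \in \frac{1}{q}M\}$ is identified with the union over $i$ of the monomial $\Z$-bases of the $s$ free summands, hence is a $\Z$-basis of $\Hom_{\Z[T]}(F_*\Z[T], \Z[T])$, which is the claim.

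One subtlety I should address is whether the lemma intends $\Hom$ of $\Z[T]$-modules or merely of $\Z$-modules; the relevant object for splittings is the $\OO_X$-module (hence $\Z[T]$-module) $\Hom$, and the equivariant structure on $F_*\OO_X$ described just before the lemma makes the decomposition $F_*\Z[T] = \bigoplus_i x^{u_i}\Z[T]$ and the resulting basis canonical up to the choice of the $u_i$. The main obstacle, such as it is, is purely bookkeeping: verifying carefully that the index set $\frac{1}{q}M$ matches up bijectively with $\bigsqcup_{i=1}^s \{m \in M\}$ via $a \mapsto (i, a+u_i)$ where $i$ is determined by $a \equiv -u_i$, and that under this bijection $\pi_a$ becomes the corresponding elementary basis vector $x^{a+u_i}$ in the $i$-th summand of $\Z[T]^s$. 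Linearity of each $\pi_a$ and injectivity/surjectivity of the spanning claim both fall out of this identification with no further work.
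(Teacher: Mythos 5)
Your proof is correct and takes essentially the same route as the paper: observe that $F_*\Z[T]$ is free over $\Z[T]$ with basis $x^{u_1},\dots,x^{u_s}$, note the $\pi_a$ are independent, and show that suitable linear combinations of the $\pi_a$ realize any prescribed images of the $u_i$'s. You have simply spelled out in more detail the bijection between the index set $\frac{1}{q}M$ and the monomial $\Z$-bases of the $s$ summands of $\Hom_{\Z[T]}(F_*\Z[T],\Z[T])\cong\Z[T]^s$, which the paper leaves implicit.
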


\begin{proof}
The maps $\pi_a$ are independent, and the free generators $x^{u_1}, \ldots, x^{u_s}$ for $F_* \Z[T]$ can be sent to an arbitrary $s$-tuple of elements of $\Z[T]$ by a suitable linear combination of the maps $\pi_a$.
\end{proof}

\noindent If we choose an equivariant structure for $\Hom(F_* \Z[T], \Z[T])$, as above, then the maps $\pi_a$ form a $T$-eigenbasis.  Therefore, a rational section
\[
\pi = c_1 \pi_{a_1} + \cdots + c_r \pi_{a_r}
\]
of the sheaf $\HHom(F_* \OO_X, \OO_X)$, with each $c_i$ nonzero, extends to $X$ if and only if each $\pi_{a_i}$ is regular on $X$.  For a ray, or one-dimensional cone $\rho$ in $\Sigma$, we write $v_\rho$ for the primitive generator of $\rho$.

\begin{proposition} \label{affine extension}
Let $U_\sigma$ be an affine toric variety.  Then $\pi_a$ is regular on $U_\sigma$ if and only if $\< a, v_\rho \>$ is greater than minus one for each ray $\rho$ in $\sigma$.
\end{proposition}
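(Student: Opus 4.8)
The plan is to work coordinate-wise on the monomials $x^u$, $u \in \sigma^\vee \cap \frac{1}{q}M$, which freely generate $F_*\Z[U_\sigma]$ as a $\Z[\sigma^\vee \cap M]$-module in the sense relevant here (after splitting into cosets mod $M$). By definition $\pi_a$ is regular on $U_\sigma$ precisely when $\pi_a(x^u) \in \Z[\sigma^\vee \cap M]$ for every $u \in \sigma^\vee \cap \frac{1}{q}M$; since $\pi_a(x^u)$ is either $0$ or $x^{a+u}$, the condition is that $a + u \in M$ implies $a + u \in \sigma^\vee$, i.e.\ that whenever $u \in \sigma^\vee \cap \frac{1}{q}M$ and $a + u \in M$, one has $a+u \in \sigma^\vee$.

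First I would prove the ``if'' direction. Suppose $\langle a, v_\rho \rangle > -1$ for every ray $\rho$ of $\sigma$. Take any $u \in \sigma^\vee \cap \frac{1}{q}M$ with $a + u \in M$. For each ray $\rho$ of $\sigma$ we have $\langle a+u, v_\rho\rangle = \langle a, v_\rho\rangle + \langle u, v_\rho\rangle > -1 + 0 = -1$; but $\langle a+u, v_\rho\rangle$ is an integer because $a+u \in M$ and $v_\rho \in N$, so it is $\geq 0$. Since a point of $M_\R$ lies in $\sigma^\vee$ iff it pairs nonnegatively with every ray generator of $\sigma$, this shows $a+u \in \sigma^\vee$, hence $\pi_a(x^u) = x^{a+u} \in \Z[\sigma^\vee \cap M]$, and $\pi_a$ is regular on $U_\sigma$.

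For the ``only if'' direction I would argue contrapositively: suppose $\langle a, v_\rho\rangle \leq -1$ for some ray $\rho$ of $\sigma$. The task is to exhibit a fractional lattice point $u \in \sigma^\vee \cap \frac{1}{q}M$ with $a+u \in M$ but $\langle a+u, v_\rho\rangle < 0$, so $a+u \notin \sigma^\vee$ and $\pi_a(x^u) \notin \Z[\sigma^\vee \cap M]$. The natural candidate is to pick $u \in M$ itself (so automatically in $\frac{1}{q}M$ and $a+u$ automatically in $M$) lying deep in the interior of $\sigma^\vee$ but with $\langle u, v_\rho\rangle$ small: concretely, choose $u \in \interior(\sigma^\vee) \cap M$ with $\langle u, v_\rho\rangle = 0$. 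Such $u$ exists because $v_\rho$ spans a ray of $\sigma$, so the face $v_\rho^\perp \cap \sigma^\vee$ of $\sigma^\vee$ is a proper face but still has dimension $\dim \sigma^\vee - 1 \geq$ enough to contain a lattice point in its relative interior that also lies in the relative interior of $\sigma^\vee$ along the $v_\rho^\perp$ direction — more carefully, I'd take any lattice point $w$ in the relative interior of the facet $v_\rho^\perp \cap \sigma^\vee$ and note it pairs strictly positively with every ray generator of $\sigma$ other than $v_\rho$ (and zero with $v_\rho$); after scaling up by a large integer if necessary, $w$ works. Then $\langle a + w, v_\rho\rangle = \langle a, v_\rho\rangle + 0 \leq -1 < 0$, so $a+w \notin \sigma^\vee$. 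Since $a+w \in M$, this means $\pi_a(x^w) = x^{a+w}$ is a monomial with exponent outside $\sigma^\vee$, so it is not in $\Z[\sigma^\vee \cap M]$ and $\pi_a$ fails to be regular on $U_\sigma$.

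The main obstacle is the last existence claim in the ``only if'' direction: producing a lattice point pairing to zero with $v_\rho$ but strictly positively with the other ray generators of $\sigma$. This requires knowing that $v_\rho^\perp$ meets the relative interior of $\sigma^\vee$ in a facet of full expected dimension, which follows from $\rho$ being an actual ray of the strongly convex cone $\sigma$ (so that $v_\rho$ is not in the linear span of the other rays in a degenerate way, and $\sigma^\vee$ is full-dimensional when $\sigma$ is strongly convex; the general case reduces to this by splitting off the lineality space). One should handle the edge case where $\sigma$ has dimension one separately, where $\sigma^\vee$ is a halfspace and the argument degenerates but remains valid: one simply takes $w = 0$, giving $\langle a, v_\rho \rangle \le -1 < 0$ directly. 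I would also remark that the ``if'' direction is what gets used in practice, and it is the genuinely easy half.
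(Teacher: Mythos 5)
Your ``if'' direction is correct and matches the paper's argument (the paper's phrasing is terse where it says ``$\langle u, v_\rho\rangle$ is a nonnegative integer''; your version, which notes that $\langle a+u, v_\rho\rangle$ is the integer that must exceed $-1$, is the right way to read it).

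The ``only if'' direction, however, has a genuine gap. You choose a lattice point $w \in M$ with $\langle w, v_\rho\rangle = 0$ and assert that $a + w \in M$ is automatic because $w \in M$. That is only true when $a$ itself lies in $M$. But $a$ is a general fractional lattice point in $\frac{1}{q}M$, and if $a \notin M$ then $a + w \notin M$ as well, so by definition $\pi_a(x^w) = 0$, which lies in $\Z[\sigma^\vee \cap M]$ and yields no contradiction. To fix this, $u$ must be chosen in the coset $-a + M$: write $u = m - a$ for $m \in M$ to be constructed, so that $a + u = m \in M$ always. Then the conditions become $m - a \in \sigma^\vee$ (i.e.\ $\langle m, v_\tau\rangle \geq \langle a, v_\tau\rangle$ for every ray $\tau$ of $\sigma$) and $m \notin \sigma^\vee$ (i.e.\ $\langle m, v_\rho\rangle < 0$). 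Take $\langle m, v_\rho\rangle = \lceil \langle a, v_\rho\rangle \rceil \leq -1$, which is both $\geq \langle a, v_\rho\rangle$ and $< 0$, and then adjust $m$ by lattice points in $v_\rho^\perp$ that pair very positively with the remaining ray generators of $\sigma$ (using strong convexity, much as in your existence argument for $w$) to satisfy the inequalities at the other rays. With that change the construction goes through, and in the special case $a \in M$ it reduces to what you wrote.
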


\begin{proof}
The map $\pi_a$ is regular on $U_\sigma$ if and only if it takes $\Z[\sigma^\vee \cap \frac{1}{q} M]$ into $\Z[\sigma^\vee \cap M]$.  Suppose $\<a, v_\rho\>$ is greater than minus one for each ray $\rho$ in $\sigma$ and $u$ is in $\sigma^\vee \cap \frac{1}{q}M$.  Either $\pi_a (x^u)$ is zero or $a + u$ is in $M$ and $\<u, v_\rho \>$ is a nonnegative integer for all rays $\rho$ in $\sigma$, and hence $a + u$ is in $\sigma^\vee$.  Therefore $\pi_a$ extends to $U_\sigma$.  Conversely, if $ \< a, v_\rho \> $ is less than or equal to minus one for some $\rho$, then it is straightforward to produce points $u \in \sigma^\vee$ such that $a + u$ is in $M$, but not in $\sigma^\vee$.  In this case, $\pi_a$ is not regular on $U_\sigma$.
\end{proof}

We follow the usual toric convention fixing $K = -\sum D_\rho$, the sum of the prime $T$-invariant divisors each with multiplicity minus one, as a convenient representative of the canonical class.  The polytope associated to a divisor $D = \sum d_\rho D_\rho$ is
\[
P_D = \{ u \in M_\R \ | \ \<u , v_\rho \> \geq -d_\rho \mbox{ for all } \rho\}.
\]
In particular, the polytope $P_{-K}$ associated to the anticanonical divisor is
\[
P_{-K} = \{ u \in M_\R \ | \ \<u, v_\rho \> \geq -1 \mbox{ for all } \rho \}.
\]
The interior of the polytope $P_{-K}$ controls the space of $\OO_X$-module maps from $F_* \OO_X$ to $\OO_X$ as follows.

\begin{proposition} \label{basis}
The set of maps $\pi_a$ for fractional lattice points $a$ in the interior of $P_{-K}$ is a basis for $\Hom(F_* \OO_X, \OO_X)$.
\end{proposition}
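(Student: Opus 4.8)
The plan is to reduce everything to the description of $\Hom(F_* \Z[T], \Z[T])$ supplied by the preceding lemma, and then to decide which $\Z$-linear combinations of the $\pi_a$ are regular on all of $X$. First I would note that restriction to the dense torus gives an injection $\Hom(F_* \OO_X, \OO_X) \hookrightarrow \Hom(F_* \Z[T], \Z[T])$: on each invariant affine $U_\sigma$ the sheaf $\HHom(F_* \OO_X, \OO_X)$ has sections $\Hom_{\Z[\sigma^\vee \cap M]}(\Z[\sigma^\vee \cap \frac{1}{q}M], \Z[\sigma^\vee \cap M])$, a torsion-free module over the domain $\Z[\sigma^\vee \cap M]$, so it embeds in its localization at the generic point, and a global homomorphism vanishing on $T$ vanishes identically — the same reasoning used above for splittings. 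By the lemma $\Hom(F_* \Z[T], \Z[T]) = \bigoplus_{a \in \frac{1}{q}M} \Z\pi_a$, so $\Hom(F_* \OO_X, \OO_X)$ is identified with the subgroup of those $\Z$-combinations $\sum_i c_i \pi_{a_i}$ that are regular on $X$.

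Next I would upgrade this to a statement about weight spaces. Fixing an equivariant structure on $F_* \OO_X$ as constructed above makes $\HHom(F_* \OO_X, \OO_X)$ a $T$-equivariant coherent sheaf, and the observation recorded in the text that the $\pi_a$ then form a $T$-eigenbasis of $\Hom(F_* \Z[T], \Z[T])$ means each weight space is the rank-one subgroup $\Z\pi_a$. For every cone $\sigma \in \Sigma$ the subgroup $H^0(U_\sigma, \HHom(F_* \OO_X, \OO_X)) \subseteq \Hom(F_* \Z[T], \Z[T])$ is $T$-stable, hence a direct sum of weight spaces, so it equals $\bigoplus \Z\pi_a$ over those $a$ for which $\pi_a$ is regular on $U_\sigma$. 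Intersecting over all $\sigma$ shows $\Hom(F_* \OO_X, \OO_X) = \bigoplus_{a \in S} \Z\pi_a$, where $S$ is the set of $a \in \frac{1}{q}M$ with $\pi_a$ regular on all of $X$. (This is the content of the remark made just before Proposition~\ref{affine extension}, that $\sum_i c_i\pi_{a_i}$ with every $c_i$ nonzero extends to $X$ exactly when each $\pi_{a_i}$ does; changing the chosen coset representatives only twists the grading by a single character, so which elements are homogeneous is independent of that choice.)

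It remains to identify $S$. A homomorphism is regular on $X$ precisely when it is regular on each chart $U_\sigma$, so Proposition~\ref{affine extension} gives $\pi_a \in S$ if and only if $\langle a, v_\rho \rangle > -1$ for every ray $\rho$ contained in some cone of $\Sigma$, that is, for every ray $\rho$ of $\Sigma$. Comparing with the description of $P_{-K}$, this says exactly that $a$ lies in the interior of $P_{-K}$. Hence $\{\pi_a : a \in \frac{1}{q}M \cap \interior(P_{-K})\}$ is a $\Z$-basis for $\Hom(F_* \OO_X, \OO_X)$, as claimed.

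I expect the only real content to be the middle step: passing from ``$\sum_i c_i\pi_{a_i}$ is regular on $X$'' to ``each $\pi_{a_i}$ is regular on $X$'', which rests on the fact that regularity of a rational section of an equivariantizable coherent sheaf can be tested weight-space by weight-space on each affine chart. The first and last paragraphs are bookkeeping with the lemma and Proposition~\ref{affine extension}; the only mild subtlety there is reading ``interior of $P_{-K}$'' as $\{u \in M_\R \mid \langle u, v_\rho \rangle > -1 \text{ for all } \rho\}$, which is the genuine interior when the rays of $\Sigma$ span $N_\R$.
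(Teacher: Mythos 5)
Your proof is correct and follows the same route as the paper, but you supply substantially more of the justification. The paper's own proof is three sentences: it appeals directly to Proposition~\ref{affine extension} to show that $\pi_a$ is regular on $X$ precisely when $a \in \interior(P_{-K})$, and then implicitly invokes the unproved remark preceding that proposition — that a rational section $\sum_i c_i \pi_{a_i}$ with all $c_i \neq 0$ extends to $X$ if and only if each $\pi_{a_i}$ does — to conclude that the regular $\pi_a$'s form a basis rather than merely a subset. Your middle paragraph is exactly a careful proof of that remark: you check that restriction to $T$ is injective (torsion-freeness of $\HHom$ over each affine piece), that the $U_\sigma$-sections form a $T$-stable subgroup of $\Hom(F_*\Z[T], \Z[T])$ under the chosen equivariant structure, and that $T$-stable subgroups of a $T$-comodule with one-dimensional weight spaces necessarily decompose as sums of their weight components — after which intersecting over all $\sigma$ and applying Proposition~\ref{affine extension} gives the result. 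This is a genuine strengthening in exposition rather than a different argument; the paper treats the eigenbasis step as evident, while you verify it. Your closing caveat about reading $\interior(P_{-K})$ via strict inequalities is also apt and is harmless here since $\Sigma$ is assumed complete.
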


\begin{proof}
If $a$ is not in the interior of $P_{-K}$ then $\<a, v_\rho \>$ is less than or equal to minus one for some ray $\rho \in \Sigma$ and then $\pi_a$ is not regular on $U_\rho$.  Conversely, if $a$ is in the interior of $P_{-K}$, then $\pi_a$ extends to every invariant affine open subvariety of $X$, by Proposition~\ref{affine extension}, and therefore is regular on $X$.
\end{proof}

\begin{remark}
When $X$ is smooth, Proposition~\ref{basis} corresponds to the natural identification between $\Hom(F_* \OO_X, \OO_X)$ and $H^0(X, K_X^{1-q})$ given by duality for finite flat morphisms \cite[Section~1.3]{BrionKumar05}.
\end{remark}

\begin{proposition} \label{splitting condition}
A map $\sum c_a \pi_a$ in $\Hom(F_* \OO_X, \OO_X)$ is a splitting if and only if $c_0 = 1$.
\end{proposition}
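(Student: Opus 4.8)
The plan is to reduce the statement to the torus and then read it off from the explicit formula for $\pi_a$. Write $\pi = \sum c_a \pi_a$, the sum ranging over the fractional lattice points $a$ in the interior of $P_{-K}$, which is legitimate by Proposition~\ref{basis}. Since a splitting of $X$ restricts to a splitting of $T$ and two splittings of $X$ agree if and only if they agree on $T$, the map $\pi$ is a splitting of $X$ exactly when its restriction to $T$ splits $T$. Because $F^* \colon \Z[T] \to F_* \Z[T]$ is the inclusion $\Z[M] \hookrightarrow \Z[\frac{1}{q}M]$, fixing each monomial $x^u$ with $u \in M$, and these monomials span $\Z[T]$, this amounts to the condition $\pi(x^u) = x^u$ for every $u \in M$.

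Next I would compute $\pi(x^u)$ for $u \in M$ using the definition of $\pi_a$: one has $\pi_a(x^u) = x^{a+u}$ when $a + u \in M$ and $0$ otherwise, and for $u \in M$ the condition $a + u \in M$ is simply $a \in M$. Hence $\pi(x^u) = \sum_{a \in M} c_a\, x^{a+u}$, the sum restricted to those $a$ lying in $M$. The crucial observation is that $a = 0$ is the only lattice point of $M$ in the interior of $P_{-K}$: if $a \in M$ and $\langle a, v_\rho \rangle > -1$ for every ray $\rho$ of $\Sigma$, then $\langle a, v_\rho \rangle \geq 0$ for all $\rho$ since these pairings are integers, and as $\Sigma$ is complete its rays positively span $N_\R$, which forces $\langle a, \cdot \rangle \equiv 0$ and therefore $a = 0$. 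Consequently $\pi(x^u) = c_0 \pi_0(x^u) = c_0\, x^u$ for all $u \in M$, because $\pi_0(x^u) = x^u$ for $u \in M$.

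It then follows at once that $\pi$ is a splitting if and only if $c_0 = 1$: the identity $\pi(x^u) = c_0 x^u$ equals $x^u$ for all $u \in M$ precisely when $c_0 = 1$, and already the case $u = 0$ forces this by linear independence of the monomials. There is no genuine obstacle beyond carrying out the reduction to $T$ correctly and noticing that completeness of $\Sigma$ is exactly what removes every nonzero integral point from the interior of $P_{-K}$; without that hypothesis the statement is false, since for an affine $U_\sigma$ one can have $0 \neq a \in M$ in the interior of $P_{-K}$, and then $\pi_0 + \pi_a$ has $c_0 = 1$ but is not a splitting.
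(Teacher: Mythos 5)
Your proof is correct and takes the same approach as the paper: both reduce to the observation that $0$ is the only lattice point of $M$ in the interior of $P_{-K}$, so $\pi(x^u) = c_0 x^u$ for $u \in M$. You usefully spell out why completeness of $\Sigma$ is what guarantees this, a point the paper leaves implicit.
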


\begin{proof}
Zero is the only lattice point in the interior of $P_{-K} \cap M$, so the image of $x^u$ under $\pi = \sum c_a \pi_a$ is equal to $c_0 x^u$ for $u \in M$.  In particular, since $F^*$ maps $x^u$ to $x^u$ in $F_* \OO_X$, $\pi \circ F^*$ is the identity if and only if $c_0$ is equal to one.
\end{proof}

\noindent By Proposition~\ref{splitting condition},  the set of splittings of $X$ is an affine hyperplane in $\Hom(F_* \OO_X, \OO_X)$.  For any subvariety $Y \subset X$, the condition that $\pi(F_* I_Y)$ is contained in $I_Y$ cuts out a linear subspace of $\Hom(F_* \OO_X, \OO_X)$.  So the set of splittings of $X$ that are compatible with $Y$ is an affine subspace of $\Hom(F_* \OO_X, \OO_X)$, which may be empty.  We now prove Theorem~\ref{diagonal}, which gives a necessary and sufficient condition for the space of splittings of $X \times X$ that are compatible with the diagonal to be nonempty.

\begin{proof}[Proof of Theorem~\ref{diagonal}]
Suppose $\pi = \sum c_{a,a'} \pi_{a,a'}$ is a splitting of $X \times X$ that is compatible with the diagonal.  Then the restriction of $\pi$ to the dense torus is a splitting compatible with the diagonal in $T \times T$.  For any $u \in \frac{1}{q} M$, we have
\[
1 - x^{u} \otimes x^{-u}
\]
in $F_* I_\Delta$, where $I_\Delta$ is the ideal of the diagonal in $T \times T$.  Since $\pi(1)$ is equal to one, the restriction of $\pi(x^{u} \otimes x^{-u})$ to the diagonal must also be equal to one.  Now the restriction of $\pi(x^{u} \otimes x^{-u})$ to the diagonal is a Laurent polynomial in $\Z[T]$ whose constant term is
\[
\sum_{ a \in [u]} c_{-a,a},
\]
where $[u]$ is the coset of $u$ in $\frac{1}{q} M / M$.  Since the polytope associated to $-K_{X \times X}$ is $P_{-K_X} \times P_{-K_X}$, there must be a representative $a$ of $[u]$ such that both $a$ and $-a$ are contained in the interior of $P_{-K_X}$, which means that $a$ is contained in the interior of the diagonal splitting polytope
\[
\F_X = P_{-K_X} \cap -P_{-K_X}.
\]

For the converse, suppose that every nonzero equivalence class $[u_i]$ in $\frac{1}{q}M/M$ has a representative $a_i$ in the interior of $\F_X$.  Then
\[
\pi_\Delta = \pi_0 + \sum_{i=1}^s \pi_{a_i,-a_i}
\]
is a splitting, and we claim that $\pi_\Delta$ is compatible with the diagonal.  To see this, note that the ideal of the diagonal in $U_\sigma \times U_\sigma$ is generated by the Laurent polynomials $1 - x^{u} \otimes x^{-u}$ for $u$ in $\sigma^\vee \cap M$.  Then $F_* I_\Delta$ is generated as a $\Z[U_\sigma \times U_\sigma]$-module by the
\[
x^b - x^b \cdot (x^{u} \otimes x^{-u}),
\]
as $b = (b_1, b_2)$ ranges over $\frac{1}{q}(M \times M)$ and $u$ ranges over $\frac{1}{q}M$.  Now the restriction of $\pi_\Delta (x^b)$ to the diagonal is $x^{b_1 + b_2}$ if $b_1 + b_2$ is in $M$ and zero otherwise.  In particular, the restriction of $\pi_\Delta(x^b - x^b \cdot (x^{u} \otimes x^{-u}))$ to the diagonal vanishes, as required.
\end{proof}

\begin{proof}[Proof of Theorem~\ref{semidiagonals}]
Suppose the diagonal is compatibly split in $X \times X$.  Then every nonzero equivalence class $[u_j]$ in $\frac{1}{q}M/M$ is represented by a fractional lattice point $a_j$ in the interior of $\F_X$, by Theorem~\ref{diagonal}.  

A splitting of $X^n$ is compatible with the union $\Delta_1 \cup \cdots \cup \Delta_{n-1}$ if it is compatible with each $\Delta_i$.  For $u \in M$, let $u^{(i)}$ denote the lattice point in $M^n$ whose only nonzero coordinate is the $i$-th one, which is equal to $u$.  The ideal of $\Delta_i$ is generated by the functions $1 - x^{u^{(i)}} \cdot x^{-u^{(i+1)}}$.  We claim that the splitting
\[
\pi = \pi_0 + \sum_{i=1}^{n-1} \sum_{j} \pi_{a_j^{(i)} - a_j^{(i+1)}}
\]
is compatible with $\Delta_i$ for $1 \leq i < n$, and hence with the union $\Delta_1 \cup \cdots \cup \Delta_{n-1}$.  The proof of the claim is then similar to the proof of Theorem~\ref{diagonal} above, and the theorem follows.
\end{proof}

\begin{proof}[Proof of Theorem~\ref{diagonal to Koszul}] Suppose the diagonal is compatibly split in $X \times X$.  Then $\Delta_1 \cup \cdots \cup \Delta_{n-1}$ is compatibly split in $X^n$ for every $n$, by Theorem~\ref{semidiagonals}.  Therefore, for any nef line bundles $L_1, \ldots, L_r$ on $X$ the section ring $R(L_1, \ldots, L_r)$ is normal and Koszul, by Proposition~\ref{Koszul}.
\end{proof}

\begin{remark}
Pairs of opposite lattice points $u$ and $-u$ in the polytopes associated to anticanonical divisors have also appeared in relation to the classification of smooth toric Fano varieties.  In particular, Ewald conjectured twenty years ago that if $X$ is a smooth toric Fano variety then $\F_X$ contains a basis for the character lattice $M$ \cite{Ewald88}.  Ewald's conjecture has been verified for smooth toric Fano varieties of dimension less than or equal to seven by {\O}bro \cite[Section~4.1]{OebroThesis}.  However, it remains unknown in higher dimensions whether there exists a single nonzero lattice point $u$ in $\F_X$ \cite[Section~4.6]{KreuzerNill07}.
\end{remark}

\bibliography{math}
\bibliographystyle{amsalpha}

\end{document}